\numberwithin{equation}{section}
\newtheorem{definition}{Definition}
\newtheorem{lemma}{Lemma}
\newtheorem{theorem}{Theorem}
\newtheorem{remark}{Remark}
\newtheorem*{question}{Question}
\theoremstyle{definition}
\begin{document}

\newcommand{\riem}{(M^m, \langle \, , \, \rangle)}
\newcommand{\Hess}{\mathrm{Hess}\, }
\newcommand{\hess}{\mathrm{hess}\, }
\newcommand{\ess}{\mathrm{ess}}
\newcommand{\longra}{\longrightarrow}
\newcommand{\eps}{\varepsilon}
\newcommand{\vol}{\mathrm{vol}}
\newcommand{\di}{\mathrm{d}}
\newcommand{\R}{\mathbb R}
\newcommand{\C}{\mathbb C}
\newcommand{\Z}{\mathbb Z}
\newcommand{\N}{\mathbb N}
\newcommand{\HH}{\mathbb H}
\newcommand{\Sph}{\mathbb S}
\newcommand{\metric}{\langle \, , \, \rangle}
\newcommand{\metricN}{( \, , \, )}
\newcommand{\lip}{\mathrm{Lip}}
\newcommand{\loc}{\mathrm{loc}}
\newcommand{\diver}{\mathrm{div}}
\newcommand{\disp}{\displaystyle}
\newcommand{\rad}{\mathrm{rad}}
\newcommand{\Ricc}{\mathrm{Ric}}
\newcommand{\Riem}{\mathrm{Riem}}
\newcommand{\Sect}{\mathrm{Sec}}
\newcommand{\sn}{\mathrm{sn}}
\newcommand{\cn}{\mathrm{cn}}
\newcommand{\ink}{\mathrm{in}}
\newcommand{\hol}{\mathrm{H\ddot{o}l}}
\newcommand{\capac}{\mathrm{cap}}
\newcommand{\ricc}{\operatorname{Ricc}}
\newcommand{\supp}{\operatorname{supp}}
\newcommand{\sgn}{\operatorname{sgn}}
\newcommand{\F}{\mathcal{F}}
\newcommand{\cut}{\mathrm{cut}}
\newcommand{\rk}{\mathrm{rk}}
\newcommand{\crit}{\mathrm{crit}}
\newcommand{\diam}{\mathrm{diam}}
\newcommand{\haus}{\mathscr{H}}
\newcommand{\gr}{\mathcal{G}}
\newcommand{\Bo}{\mathbb{B}}
\newcommand{\ra}{\rightarrow}
\newcommand{\dist}{\mathrm{dist}}
\newcommand{\II}{\mathrm{II}}
\newcommand{\DD}{\mathbb{D}}
\newcommand{\NN}{\mathbb{N}}
\newcommand{\tr}{\mathrm{Tr}}
\newcommand{\VV}{\mathcal{V}}
\newcommand{\PP}{\mathcal{P}}
\newcommand{\inj}{\mathrm{inj}}
\newcommand{\QQ}{\mathcal{Q}}
\newcommand{\ubar}{\overline{u}}
\newcommand{\tcr}{\textcolor{red}}
\newcommand{\tcb}{\textcolor{blue}}
\newcommand{\MM}{\mathscr{M}}
\newcommand{\FF}{\mathscr{F}}
\newcommand{\harm}{\mathscr{H}}
\newcommand{\set}[1]{{\left\{#1\right\}}}               
\newcommand{\pa}[1]{{\left(#1\right)}}                  
\newcommand{\sq}[1]{{\left[#1\right]}}                  
\newcommand{\abs}[1]{{\left|#1\right|}}                
\newcommand{\norm}[1]{{\left\|#1\right\|}}            
\newcommand{\BB}{\mathbb{B}}
\newcommand{\ber}{\mathscr{B}}

\title[Bernstein and half-space properties]{Bernstein and half-space properties for\\ 
minimal graphs under Ricci lower bounds}

\author{Giulio Colombo}
\address{Dipartimento di Matematica ``F. Enriques", Universit\`a degli studi di Milano, Via Saldini 50, I-20133 Milano (Italy).}
\email{giulio.colombo@unimi.it, marco.rigoli55@gmail.com}

\author{Marco Magliaro}
\address{Departamento de Matem\'atica, Universidade Federal do Cear\'a, Campus do Pici - Bloco 914, 60.455-760, Fortaleza - CE (Brazil).}
\email{marco.magliaro@gmail.com}

\author{Luciano Mari}
\address{Dipartimento di Matematica ``G. Peano", Universit\`a degli Studi di Torino, Via Carlo Alberto 10, 10123 Torino (Italy).}
\email{luciano.mari@unito.it}

\author{Marco Rigoli}


\thanks{L. Mari is supported by the project SNS17\_B\_MARI by the Scuola Normale Superiore.}

\date{\today}

\begin{abstract}
In this paper, we prove a new gradient estimate for minimal graphs defined on domains of a complete manifold $M$ with Ricci curvature bounded from below. This enables us to show that positive, entire minimal graphs on manifolds with non-negative Ricci curvature are constant, and that complete, parabolic manifolds with Ricci curvature bounded from below have the half-space property. We avoid the need of sectional curvature bounds on $M$ by exploiting a form of the Ahlfors-Khas'minskii duality in nonlinear potential theory.
\end{abstract}

\subjclass[2020]{Primary 53C21, 53C42; Secondary 53C24, 58J65, 31C12}

\keywords{Bernstein theorem, half-space, minimal graph, stochastic completeness, maximum principle, Ricci curvature.}

\maketitle
\tableofcontents

\section{Introduction}

Let $(M,\sigma)$ be a complete Riemannian manifold of dimension $m \ge 2$. In this paper we investigate entire minimal graphs, that is, solutions of the minimal surface equation
\begin{equation}\label{P}
\MM[u] \doteq \diver \left( \frac{D u}{\sqrt{1+|D u|^2}}\right) = 0 \qquad \text{on } \, M.
\end{equation}
A solution of \eqref{P} gives rise to a minimal graph $F : M \ra M \times \R$ via $F(x) = (x,u(x))$, where $M \times \R$ is endowed with coordinates $(x,t)$ and the product metric $\metric = \sigma + \di t^2$.\par
The existence problem for non-constant solutions to \eqref{P},  possibly subjected to restrictions like the positivity or a controlled growth at infinity of $u$, was first addressed when $M$ is the Euclidean space $\R^m$, and the efforts to solve it favoured the flourishing of Geometric Analysis. In particular, we recall the following achievements.

\begin{itemize}
\item[$(\ber 1)$] The celebrated solution to the \emph{Bernstein problem}: solutions of \eqref{P} on $\R^m$ are all affine if and only if $m \le 7$. The case $m=2$ was proved by Bernstein in \cite{bernstein} (see also \cite{emi,hopf_bern}, who fixed a gap in the original argument). Since then, various other proofs appeared, each one crucially depending on some peculiarities of the two-dimensional setting, see \cite{farina_minimal} for an account. The first method allowing for a higher-dimensional extension was given by Fleming \cite{fleming}, and since then, works of De Giorgi (\cite{dg1,dg2}, $m=3$),  Almgren (\cite{almgren}, $m=4$), Simons (\cite{simons}, $m \le 7$) and Bombieri, De Giorgi and Giusti (\cite{bdgg}, counterexamples if $m \ge 8$) settled the problem. Further examples of nontrivial, entire minimal graphs in dimension $m \ge 8$ have been constructed by Simon \cite{simon_jdg}.  
\item[$(\ber 2)$] In any dimension, positive solutions of \eqref{P} are constant. This is due to Finn \cite{finn} for $m=2$, and to Bombieri, De Giorgi and Miranda \cite{bdgm} for $m \ge 3$.
\item[$(\ber 3)$] In any dimension, solutions of \eqref{P} with at most linear growth on one side have bounded gradient \cite{bdgm}. In view of results by  Moser \cite{jmoser} and De Giorgi \cite{dg1,dg2}, they are therefore affine.
\end{itemize}

Another interesting result related to $(\ber 3)$ guarantees that solutions of \eqref{P} are affine whenever $m-7$ partial derivatives of $u$ are bounded on one side (not necessarily the same). This was obtained by Farina in \cite{far2}, see also previous works in \cite{bg,far1}.\par
On manifolds different from $\R^m$, the set of solutions of \eqref{P} may drastically change. For instance, if $M$ is the hyperbolic space $\HH^m$ of curvature $-1$, by \cite{nr} (for $m=2$) and \cite{ES_F_R} (for $m \ge 3$) equation \eqref{P} has uncountably many bounded, non-constant solutions: for every $\phi$ continuous on $\partial_\infty \HH^m$, there exists $u$ solving \eqref{P} on $\HH^m$ and approaching $\phi$ at infinity. The search for sharp conditions to guarantee the solvability of such Plateau's problem at infinity on a Cartan-Hadamard manifold, that is, on a complete, simply-connected manifold with non-positive sectional curvature, motivated various interesting works in recent years \cite{galvezrosenberg,rite_1,chr,chr2,chh,hr,chh_nonexistence}. The emerging picture, rather exhaustive, is subtle, and the interested reader can find a detailed account in \cite{esko_survey}. We only mention here that, in dimension $m \ge 3$, a pinching on the sectional curvature, $\Sect$, is needed. However, with the exception of \cite{chh_nonexistence} that will be recalled below, none of these works provide non-existence theorems in the spirit of $(\ber 1), (\ber 2), (\ber 3)$, leading us to formulate the following


\begin{question} 
Which geometric conditions on $M$ can guarantee the validity of results analogous to $(\ber 1), (\ber 2), (\ber 3)$?
\end{question}

To date, a thorough answer seems still far from being reached. In view of the structure theory initiated by Cheeger and Colding in  \cite{cc_almost_rigidity,cc_1,cc_2,cc_3}, it is reasonable to hope that results similar to those in $(\ber 1),(\ber 2), (\ber 3)$ might be obtainable on manifolds with non negative Ricci curvature, $\Ricc \ge 0$. This is reinforced by the analogy with the behaviour of harmonic functions on $M$: indeed, positive harmonic functions on a manifold with $\Ricc \ge 0$ are constant \cite{yau, chengyau}, and harmonic functions with linear growth provide splitting directions for the tangent cones at infinity of $M$, and for $M$ itself provided that $\Ricc \ge 0$ is strengthened to $\Sect \ge 0$ \cite{ccm}. The parallelism is made even more evident by considering that \eqref{P} rewrites as the harmonicity of $u$ on the graph with its induced metric.\par
Also, the class of manifolds with $\Sect \ge 0$ is natural to investigate and, indeed, problem $(\ber 1)$, for which we are not aware of any results in dimension $m \ge 3$ besides those in $\R^m$, seems very hard even in this setting. On the other hand, assuming suitable lower bounds on the sectional curvature problems $(\ber 2)$ and $(\ber 3)$ are more treatable, and in recent years a few works have appeared: 
\begin{itemize}
\item[-] $(\ber 2)$ was shown to hold on manifolds with $\Ricc \ge 0$ and $\Sect \ge -\kappa^2$, for some $\kappa \in \R$, by work of Rosenberg, Schulze and Spruck \cite{rosenbergschulzespruck}. Moreover, in \cite{chh_nonexistence} Casteras, Heinonen and Holopainen proved the constancy of positive minimal graphs with at most linear growth on manifolds with asymptotically non-negative sectional curvature and only one end.
\item[-] $(\ber 3)$ was studied by Ding, Jost and Xin in \cite{dingjostxin,dingjostxin2}: there, in particular, the authors showed that $\R^m$ is the only manifold satisfying   
\[
\left\{ \begin{array}{l}
\Ricc \ge 0, \qquad \disp \lim_{r \ra \infty} \frac{|B_r|}{r^m} > 0 \\[0.3cm]
\text{the curvature tensor decays quadratically}
\end{array}\right.
\]
and supporting a non-constant minimal graph with at most linear growth on one side.
\end{itemize}

Hereafter, $B_r$ will denote the geodesic ball of radius $r$ in $(M,\sigma)$ centered at a fixed origin, and $|B_r|$ its Riemannian volume.

In any of \cite{rosenbergschulzespruck,chh_nonexistence,dingjostxin,dingjostxin2}, sectional curvature bounds are used at various technical stages. We briefly explain the main difficulty in removing them. A key tool in proving the results are local pointwise or integral estimates for the function 
	\[
	\sqrt{1+|D u|^2},
	\]
which in view of the Jacobi equation solves 
	\[
	\Delta_g W = \left(\|\II\|^2 + \frac{\Ricc(Du,Du)}{W^2}\right) W + 2 \frac{\|\nabla W\|^2}{W},
	\]
where $\|\cdot\|, \nabla, \Delta_g$ are the norm, gradient and Laplacian of the induced metric $g = F^*\metric$ on the graph of $u$, naturally identifiable with $M$ itself, and $\II$ is the second fundamental form. This is the analogue of Bochner's formula for harmonic functions on $M$, and suggests that a Ricci lower bound qualifies as the natural assumption to be put on $M$. However, $\Delta_g \phi = g^{ij}\phi_{ij}$, with $g^{ij}$ being the inverse of $g$ and $\phi_{ij} = D^2 \phi(e_i,e_j)$. The eigenvalues of $g^{ij}$ with respect to the background metric $\sigma$ are $1$ and $W^{-2}$, so $\Delta_g$ ceases to be uniformly elliptic on $M$ if $|Du| \to \infty$. A Ricci lower bound, which by comparison theory can only control the Laplacian $\sigma^{ij} r_{ij}$ of the distance to a fixed point, seems therefore unable to provide an efficient way to localize estimates for $W$ via the distance function, as customarily done, and calls for new ideas.\par
Our purpose in the present paper is to study $(\ber 2)$ on manifolds with $\Ricc \ge 0$. More generally, we address both Bernstein and half-space properties for $M$, according to the following definition in \cite{rosenbergschulzespruck}:

\begin{definition} 
\emph{We say that
	\begin{itemize}
	\item[-] $M$ has the \emph{Bernstein property} if it satisfies $(\ber 2)$: positive, entire minimal graphs over $M$ are constant; 		\item[-] $M$ has the \emph{half-space property} if each properly immersed, minimal hypersurface $S \ra M \times \R$ that lies on some upper half-space $\{t > t_0\}$ is a slice, that is, $S = \{t=t_1\}$ for some constant $t_1$.
	\end{itemize}
}
\end{definition}

In \cite{rosenbergschulzespruck}, Rosenberg, Schulze and Spruck proved the following  

\begin{theorem}[\cite{rosenbergschulzespruck}]\label{teo_rss}
Let $M$ be a complete manifold of dimension $m \ge 2$. 
\begin{itemize}
\item[$(i)$] If $\Ricc \ge 0$ and $\Sect \ge -\kappa^2$ for some constant $\kappa>0$, then $M$ has the Bernstein property.
\item[$(ii)$] If $M$ is parabolic and $|\Sect| \le \kappa^2$ for some constant $\kappa> 0$, then $M$ has the half-space property.
\end{itemize}
\end{theorem}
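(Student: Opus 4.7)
My plan is to treat the two assertions separately, in both cases exploiting the minimal surface equation on the graph $S$ inside $M \times \R$ and the interplay between extrinsic quantities (the angle function, the second fundamental form) and the ambient curvature bounds.

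For part $(i)$, I would aim to establish a Bombieri--De Giorgi--Miranda-style interior gradient estimate for positive solutions of \eqref{P}. Let $u > 0$ solve \eqref{P} on a geodesic ball $B_{2R}(p) \subset M$; the target inequality has the form
\begin{equation*}
\sqrt{1 + |D u|^2}(p) \;\le\; C_1 \exp\!\left( \frac{C_2 \, u(p)}{R}\right),
\end{equation*}
with $C_1, C_2$ depending only on $m$ and $\kappa$. Working on the minimal graph $S$, the angle function $W = \sqrt{1+|Du|^2}$ satisfies a Jacobi-type identity relating $\Delta_S \log W$ to $|A|^2$ and the ambient Ricci term $\overline{\Ricc}(\nu,\nu)$, which is nonnegative since $\Ricc_M \ge 0$. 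I would then test this with a function of the shape $\eta(r_p) \, W \, e^{-\alpha u}$, where $\eta$ is a smooth cutoff of the distance $r_p$ from $p$ in $M$ and $\alpha>0$ is tuned to absorb undesired linear terms, and apply the weak maximum principle on $S$; the sectional lower bound $\Sect_M \ge -\kappa^2$ enters through Laplacian comparison to control $\Delta_S \eta(r_p)$. Sending $R \to \infty$ at a fixed $p$ yields $|Du|(p)$ bounded. To upgrade this to constancy of $u$, I would use that bounded gradient plus the ambient two-sided sectional control gives, via interior Schauder-type estimates and the Gauss equation, a Ricci lower bound on $S$; since the vertical coordinate $h(x,t)=t$ is harmonic on any minimal hypersurface of $M\times\R$, applying Yau's Liouville theorem to the positive harmonic function $h|_S = u$ forces it to be constant.

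For part $(ii)$, let $S^m \hookrightarrow M \times \R$ be a properly immersed minimal hypersurface contained in $\{t > t_0\}$; translating vertically, assume $t_0 = 0$. The key observation is that $h(x,t) = t$ restricts to a \emph{positive harmonic function} on $S$: because $\partial_t$ is a parallel Killing field in $M \times \R$, $h$ is affine in the ambient and hence $\Delta_S h = \overline{\Delta}\, h - \Hess h(\nu,\nu) - m\,\langle H, \overline{\nabla} h\rangle = 0$ by minimality of $S$. The task therefore reduces to showing that $S$ is \emph{parabolic}, because a positive harmonic function on a parabolic manifold is constant, which forces $S$ to be a slice. To obtain parabolicity of $S$, I would combine $(a)$ parabolicity of $M$, $(b)$ the two-sided bound $|\Sect_M| \le \kappa^2$, which controls the geometry of $M \times \R$ and, through a Simons-type argument together with interior regularity, yields a pointwise bound on $|A|$ along $S$, and $(c)$ properness of the immersion, in order to transplant a Khas'minskii-type exhaustion (or a volume-growth parabolicity criterion a la Grigor'yan) from $M \times \R$ to $S$.

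The main obstacle in $(i)$ is closing the loop after the gradient estimate: the hypothesis $\Ricc_M \ge 0$ alone does not transfer to $S$ because the Gauss equation involves the second fundamental form, and $\Sect_M \ge -\kappa^2$ is used precisely to make the $|A|$-control quantitative. In $(ii)$, the subtle step is transferring parabolicity from $M$ to the immersed $S$, where no direct volume comparison is available without a two-sided sectional bound; this is exactly where the hypothesis $|\Sect_M|\le \kappa^2$ is invoked. Both difficulties clarify the role of the sectional assumptions in Theorem~\ref{teo_rss}, and the advance of the present paper is to dispense with them by substituting sectional comparison with a nonlinear potential-theoretic Ahlfors--Khas'minskii duality argument.
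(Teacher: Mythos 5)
This statement is quoted from \cite{rosenbergschulzespruck}; the paper under review does not prove it, but its introduction and Section \ref{sec_Thm2} make clear what the actual argument is: a Korevaar-type local gradient estimate for part $(i)$, and, for part $(ii)$, the construction of an auxiliary minimal graph below $S$ to which the gradient estimate and the parabolicity of $M$ are applied. Measured against that, your sketch of the gradient estimate in $(i)$ is in the right spirit, but two points are off. First, the sectional hypothesis does not enter through ``Laplacian comparison'': one must estimate $\Delta_\Sigma r = g^{ij}r_{ij}$, i.e.\ the trace of the \emph{full} Hessian $D^2r$ against the graph metric, and this requires the Hessian comparison theorem, hence $\Sect \ge -\kappa^2$; a Ricci bound (Laplacian comparison) only controls $\sigma^{ij}r_{ij}$ and is useless here -- this distinction is exactly the obstruction the present paper is built to circumvent. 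Second, your closing step is genuinely flawed: hypothesis $(i)$ gives only a one-sided sectional bound, so there is no ``ambient two-sided sectional control'', no Schauder-type $\|\II\|$ bound, and no Gauss-equation Ricci lower bound on $S$ to feed into Yau's theorem. After the gradient estimate one instead concludes with an argument that needs only $\Ricc \ge 0$ on the base: since $W$ is bounded, $g$ is uniformly equivalent to $\sigma$, and $u$ is a positive solution of a uniformly elliptic divergence-form equation (equivalently, a positive harmonic function on $(\Sigma,g)$), so the Liouville property follows from doubling plus Poincar\'e/Moser--Harnack on $(M,\sigma)$ -- not from curvature bounds on $S$.

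For part $(ii)$ your route (show that the properly immersed hypersurface $S$ is itself parabolic and apply Liouville to the positive harmonic height function) is different from the actual proof and contains a genuine gap. A Simons-type argument gives no pointwise $\|\II\|$ bound for a general (non-graphical, not assumed stable) properly immersed minimal hypersurface, and parabolicity does not transplant from $M$ to $S$ through properness: restricting an ambient Khas'minskii potential to $S$ controls $\Delta_S$ only through the trace of the ambient Hessian over the unknown tangent planes of $S$, and properness gives no area or Dirichlet-integral comparison (periodic minimal surfaces in $\R^2\times\R$ are properly embedded over the parabolic base $\R^2$ yet transient). The argument of \cite{rosenbergschulzespruck}, reproduced with Perron's method in Section \ref{sec_Thm2}, avoids this entirely: after normalizing $\inf t|_S = 0$ and using the tangency principle, one constructs a minimal graph $u$ with $0<u\le\delta$ over $M\setminus B_\eps$ lying below $S$, applies the interior gradient estimate (this is where $|\Sect|\le\kappa^2$ is used, to produce and control the approximating graphs) to get $|Du|\le C$, and only then transfers parabolicity from $M$ to this graph via the Dirichlet-integral comparison $\int_\Sigma\|\nabla\phi\|^2\,\di x_g \le C'\int_M|D\phi|^2\,\di x$; the harmonic function $u$ on the graph must then equal $\delta$, contradicting $\inf u = 0$. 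Without this auxiliary-graph step your reduction ``$S$ is parabolic'' is unsupported, so the proposal does not close.
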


\begin{remark}\label{rem_parabolic}
\emph{We recall that a manifold $M$ is said to be parabolic if every function $u \in C(M) \cap W^{1,2}_\loc(M)$ solving 
	\[
	\Delta u \ge 0 \quad \text{weakly on } \, M, \qquad \sup_M u < \infty 
	\]
	is constant. By classical works (see the account in \cite{grigoryan}), a sufficient condition for parabolicity is
	\begin{equation}\label{bound_liyau}
	\int^\infty \frac{s \di s}{|B_s|} = \infty, 
	\end{equation}
which is also necessary on manifolds with $\Ricc \ge 0$ (cf. \cite{liyau_acta}). 
}
\end{remark}	

Our main result removes all sectional curvature requirements in Theorem \ref{teo_rss}. It seems reasonable to conjecture that the Ricci lower bounds in the next theorem are sharp.
	
\begin{theorem}\label{mainteo_tiporss}
Let $M$ be a complete manifold of dimension $m \ge 2$. 
\begin{itemize}
\item[$(i)$] If $\Ricc \ge 0$, then $M$ has the Bernstein property $(\ber 2)$.
\item[$(ii)$] If $M$ is parabolic and $\Ricc \ge -(m-1)\kappa^2$ for some constant $\kappa> 0$, then $M$ has the half-space property.
\end{itemize}
\end{theorem}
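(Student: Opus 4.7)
The plan is to reduce both statements to a single new gradient estimate for solutions of \eqref{P} on geodesic balls, whose proof avoids Laplacian comparison for the distance function (and hence sectional curvature bounds) by replacing the usual cut-off with an Ahlfors-Khas'minskii potential on $M$.

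\textbf{Step 1 (gradient estimate).} On the minimal graph $G_u = \{(x,u(x)) : x \in M\}$ with induced metric $g$, the angle function $\theta = 1/\sqrt{1+|Du|^2} = \langle \nu, \partial_t\rangle$ satisfies the Jacobi equation
\[
\Delta_g \theta + \bigl(|A|^2 + \overline{\Ricc}(\nu,\nu)\bigr)\theta = 0,
\]
since $\partial_t$ is a parallel Killing field on $M \times \R$. From $\Ricc_M \ge -\kappa^2$ one deduces $\overline{\Ricc}(\nu,\nu) \ge -\kappa^2(1-\theta^2)$, hence $w = -\log\theta \ge 0$ satisfies
\[
\Delta_g w \ge |\nabla_g w|^2 + |A|^2 - \kappa^2.
\]
To turn this differential inequality into a pointwise bound on $w(x_0)$, I would use a proper Ahlfors-Khas'minskii potential $\varphi: M \to \R$, whose existence is guaranteed by the Ricci lower bound via nonlinear potential theory, in place of a distance-based cut-off. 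Testing the subharmonicity of $w$ against truncations of $\varphi$ should yield an estimate of the form
\[
w(x_0) \le \Psi\bigl(\mathrm{osc}_{\{\varphi \le c\}} u,\, c,\, \kappa\bigr)
\]
for $c$ large, with $\Psi$ vanishing as $c \to \infty$ whenever $\mathrm{osc}_{\{\varphi\le c\}} u$ grows sufficiently slowly.

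\textbf{Step 2 (part (i)).} Let $u > 0$ be entire, and fix $x_0 \in M$. Under $\Ricc \ge 0$, $\varphi$ can be chosen with arbitrarily slow growth at infinity; moreover, a Harnack inequality for positive solutions of $\MM[u] = 0$ on $\Ricc \ge 0$ manifolds bounds $\mathrm{osc}_{\{\varphi \le c\}} u$ linearly in $c$ and $u(x_0)$. Plugging this into the estimate of Step 1 and letting $c \to \infty$ forces $w(x_0) = 0$, so $Du(x_0) = 0$; since $x_0$ was arbitrary, $u$ is constant.

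\textbf{Step 3 (part (ii)).} Let $S \hookrightarrow M \times \R$ be properly minimally immersed in $\{t > t_0\}$, and set $t_\ast = \inf_S t$. Since $\partial_t$ is parallel Killing, the height $h(x,t) = t$ restricts to a harmonic function on $S$, bounded below by $t_\ast$. The gradient estimate of Step 1, applied to local graph representations of $S$ over small balls in $M$, yields a uniform lower bound on $\theta|_S$ depending only on $\kappa$ and the local height oscillation. This forces the projection $\pi : S \to M$ to be a locally finite-sheeted map with bounded distortion, so the volume growth condition \eqref{bound_liyau} on $M$ is inherited by $S$; by Remark \ref{rem_parabolic}, $S$ is parabolic. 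The bounded-below harmonic function $h$ on the parabolic manifold $S$ must therefore be constant, and we conclude that $S$ is a slice.

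\textbf{Main obstacle.} The crucial difficulty is the construction in Step 1 of an Ahlfors-Khas'minskii potential $\varphi$ adapted to the minimal surface operator $\MM$ under only a Ricci lower bound, and its integration against the Jacobi inequality for $w$. Classical gradient estimates rely on Laplacian comparison, which requires sectional curvature control; reformulating the cut-off argument through a potential-theoretic duality is the decisive step that allows the sectional curvature hypothesis of Theorem \ref{teo_rss} to be removed.
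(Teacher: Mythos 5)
Your high-level instinct (replace distance-based cutoffs by Ahlfors--Khas'minskii potentials, and reduce everything to a gradient estimate) matches the spirit of the paper, but the way you deploy it has a genuine gap. In Step 1 you construct the AK potential $\varphi$ \emph{on $M$} and propose to test the Jacobi inequality for $w=-\log\theta$ (which lives in the graph metric $g$) against truncations of $\varphi$. This runs into exactly the obstruction the whole argument must overcome: to use an $M$-function in the graph metric you need to control $g^{ij}\varphi_{ij}$, and since the eigenvalues of $g^{ij}$ relative to $\sigma$ are $1$ and $(1+|Du|^2)^{-1}$, a Ricci lower bound --- which only controls the trace $\sigma^{ij}\varphi_{ij}$ --- gives no bound on $g^{ij}\varphi_{ij}$. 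The paper avoids this by working intrinsically on the graph: a calibration argument shows the minimal graph has at most quadratic-exponential volume growth, hence (after an isometric embedding of the relevant piece into a complete manifold) it is stochastically complete, and AK duality is then applied to $\Delta_g$ itself to produce an exhaustion $\varrho$ with $\Delta_g\varrho\le\eps-\|\nabla\varrho\|^2$; the estimate is then a maximum-principle argument for $z=We^{-Cu}e^{-\eps\varrho}-\delta W$ on a relatively compact upper level set, yielding the sharp bound $\sqrt{1+|Du|^2}\le e^{\kappa\sqrt{m-1}\,u}$ (with a boundary $\limsup$ term on domains). Your Step 2 is then both unjustified and unnecessary: a Harnack inequality for $\MM[u]=0$ under only $\Ricc\ge 0$ is not available by classical means (it is essentially the content of an independent, completely different work cited in the paper), while the sharp entire estimate with $\kappa=0$ gives $\sqrt{1+|Du|^2}\le 1$, i.e.\ $Du\equiv 0$, directly.

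Step 3 also fails as written. Applying the gradient estimate to local graph representations of $S$ over small balls gives nothing uniform, because on such pieces you have no control of the boundary term in the estimate (and near points where $\theta=0$ there is no graph representation at all; $S$ is only a proper immersion). Moreover, in part (ii) parabolicity of $M$ is the hypothesis, not the volume condition \eqref{bound_liyau}: under $\Ricc\ge-\kappa^2$ with $\kappa>0$ parabolicity does not reduce to a volume growth bound, so there is no volume condition for $S$ to ``inherit''. The missing ingredient is the construction (in the paper, via Perron's method with the tangency principle and minimal replacements) of an auxiliary entire minimal graph $u$ on $M\setminus B_\eps$, with $0\le u\le\delta$, $u=\delta$ on $\partial B_\eps$, lying below $\varphi(S)$; since this $u$ is constant on the boundary, the boundary version of the gradient estimate applies and yields a \emph{global} bound $|Du|\le C$. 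That uniform bound makes the Dirichlet forms of $M$ and of the graph comparable, so parabolicity of $M$ transfers to the graph, and the bounded harmonic height function on the graph must be constant ($\equiv\delta$), contradicting $\inf u=0$. Without this auxiliary construction and the boundary form of the estimate, the half-space argument does not go through.
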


Recently, Ding in \cite{ding} proved $(\ber 2)$ on manifolds with $\Ricc \ge 0$ by means of an entirely different method. It is interesting to compare his work, which appeared the same week as ours on arXiv, to the approach herein: on the one hand, his technique is very robust and applies, quite more generally, to manifolds possessing a doubling and a weak Neumann-Poincar\'e inequalities. On the other hand, it is restricted to entire graphs, and seems not applicable to show $(ii)$. In fact, the constancy of $u$ is obtained from H\"older bounds rather than gradient estimates.  To our knowledge, leaving aside the case of manifolds $M$ with slow volume growth (considered in Remark \ref{rem_slow} below), our and Ding's results are the first that guarantee a control on an entire minimal graph under just a Ricci bound on $M$. 

\begin{remark}[\textbf{Manifolds with slow volume growth}]\label{rem_slow}
\emph{If $M$ is complete and the volume of its geodesic balls $B_r$ centered at a fixed origin satisfies \eqref{bound_liyau}, then it is easy to prove that $M$ has both the Bernstein and the half-space properties. Indeed, an argument in \cite{liwang_mini} shows that the area of an entire minimal graph $\Sigma$ inside an extrinsic ball $\BB_r \subset M \times \R$ satisfies
	\[
	|\Sigma \cap \BB_r| \le |B_r| + \frac{1}{2} |B_{2r}\backslash B_r| \le 2|B_{2r}|.
	\]
In particular, the volume of the geodesic ball $B_r^g$ in the graph metric $g$ on $\Sigma$ satisfies
	\begin{equation}\label{eq_para_1}
	\int^{\infty} \frac{s \di s}{|B^g_s|} = \infty,
	\end{equation}
and thus $\Sigma$ is a parabolic manifold by Remark \ref{rem_parabolic}. Hence, having observed that $u$ is harmonic in the graph metric, if $u>0$ on $\Sigma$ then necessarily $u$ is constant. Regarding the half-space property, the proof in \cite{liwang_mini} can be extended to show that every entire graph that is minimal outside of a compact set satisfies the bound $|\Sigma \cap \BB_r| \le C|B_{2r}|$, for some constant $C>0$. In particular, referring to Section \ref{sec_Thm2}, the inequality holds for the minimal graph $\Sigma$ that we shall construct on $M \backslash B_\eps$ in the proof of $(ii)$, Theorem \ref{mainteo_tiporss}, once the graph function $u$ is smoothly extended on $B_\eps$ in such a way that $B_\eps \equiv \{u> \delta\}$. It follows that $\bar u = \min\{ u, \delta\}$ is superharmonic and bounded on $\Sigma$, hence constant because of \eqref{eq_para_1}. This suffices to conclude the half-space property.
}
\end{remark}

Theorem \ref{mainteo_tiporss} depends on a new gradient estimate for the mean curvature operator, which we now introduce. Properties $(\ber 2)$ and $(\ber 3)$ on $\R^m$ are based on the following inequality for a minimal graph that is positive in a ball $B_{r}(x)$, obtained in \cite{bdgm} (see also \cite{trudi} for a different proof):
	\[
	|Du(x)| \le c_1\exp\left\{ c_2 \frac{u(x)}{r} \right\}, 
	\]
for some constants $c_j = c_j(m)$. For $m=2$, the necessity of a right-hand side exponentially growing in $u$ was previously observed by Finn in \cite{finn2}, where more precise estimates were found: namely, he showed that the constant $\pi/2$ in the two-dimensional estimate
	\begin{equation}\label{gradm2}
	\sqrt{1 + |Du(x)|^2} \le \exp \left\{ \frac{\pi}{2} \frac{u(x)}{r} \right\}
	\end{equation}
is sharp. The proof of \eqref{gradm2} follows by combining works in \cite{finn,finn2,js,serrin,serrin_2}, cf. also \cite{nitsche}. For \emph{entire} minimal graphs in dimension $m \ge 3$, it is worth to recall that Bombieri and Giusti in \cite{bg} were able to obtain an upper bound for $|Du(x)|$ which is polynomial, not exponential, in terms of $\sup_{B_r}|u|/r$.

Similarly, in Theorem \ref{teo_rss} the Bernstein property is a consequence of a local gradient bound. However, the approach in \cite{rosenbergschulzespruck} is different from that in \cite{bdgm} and relies on the classical Korevaar's estimates (cf. \cite{korevaar}), a method that parallels Cheng's and Yau's one for harmonic functions (with some notable differences, cf. Section \ref{sec_proof}). Under the assumptions $\Ricc \ge 0$ and $\Sect \ge - \kappa^2$ on $B_r(x)$, the authors in \cite{rosenbergschulzespruck} obtained that any  positive solution $u : B_r(x) \ra \R^+$ of \eqref{P} shall satisfy
	\[
	|Du(x)| \le c_1\exp\left\{ c_2[1 + \kappa r \coth(\kappa r)] \frac{u^2(x)}{r^2} \right\}, 
	\]
for some $c_j=c_j(m)$. 

Differently from the aforementioned results, our gradient estimate will not be obtained from an effective local one, but rather it takes the form of a maximum principle at infinity, in the spirit of \cite{prsmemoirs,bmpr}.  

\begin{theorem}\label{teo_graph_soloricci_intro}
Let $M$ be a complete manifold of dimension $m \ge 2$ satisfying 
	\[
	\Ricc \ge - (m-1)\kappa^2, 
	\]
for some constant $\kappa \ge 0$. Let $\Omega \subset M$ be an open subset and let $u \in C^\infty(\Omega)$ be a positive solution of $\MM[u] = 0$ on $\Omega$. If either
	\begin{itemize}
	\item[$(i)$] $\Omega$ has locally Lipschitz boundary 
	and 
	\[
	\liminf_{r \ra \infty} \frac{\log |\partial \Omega \cap B_r|}{r^2} < \infty, \qquad \text{or}
	\]
	\item[$(ii)$] $u \in C(\overline \Omega)$ and is constant on $\partial \Omega$,
	\end{itemize}
	then
	\begin{equation}\label{eq_bellissima}
	\frac{\sqrt{1+|Du|^2}}{e^{\kappa\sqrt{m-1}u}} \leq  \max\left\{ 1, \limsup_{x \ra \partial \Omega} \frac{\sqrt{1+|Du(x)|^2}}{e^{\kappa\sqrt{m-1}u(x)}}\right\} \qquad \text{on } \, \Omega.
	\end{equation}
	In particular, if $\Omega = M$, 
	\begin{equation}\label{eq_entire}
	\sqrt{1+|Du|^2} \le e^{\kappa\sqrt{m-1}u} \qquad \text{on } \, M.
	\end{equation}
If equality holds in \eqref{eq_entire} at some point, then $\kappa = 0$ and $u$ is constant. 	
\end{theorem}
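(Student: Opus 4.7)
The strategy is to re-read \eqref{eq_bellissima} as a weighted maximum principle for the auxiliary function $\Phi:=e^{-\alpha u}W$, where $\alpha:=\kappa\sqrt{m-1}$ and $W:=\sqrt{1+|Du|^2}$; concretely, the estimate amounts to $\sup_\Omega\Phi\le\max\{1,\limsup_{\partial\Omega}\Phi\}$. Regard $u$ as the height function on its minimal graph $\Sigma\subset M\times\R$, endowed with the pulled-back graph metric $g=\sigma+\di u\otimes\di u$, so that $\sqrt{\det g/\det\sigma}=W$ and the minimality $\MM[u]=0$ translates to $\Delta_g u=0$.

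The crux is to produce a second-order differential inequality for $\Phi$ from the Jacobi equation of the angle function $\theta:=\langle\nu,\partial_t\rangle=1/W$ of the upward unit normal $\nu$. Since $\partial_t$ is Killing in $M\times\R$ and $\Sigma$ is minimal, $\theta$ obeys
\[
\Delta_g\theta+(|\II|^2+\overline{\Ricc}(\nu,\nu))\theta=0.
\]
The horizontal component of $\nu$ is $-Du/W$, so with the paper's normalization $\Ricc^M\ge-\kappa^2$ gives $\overline{\Ricc}(\nu,\nu)\ge-\alpha^2|\nabla_gu|_g^2$, where $|\nabla_gu|_g^2=|Du|^2/W^2$. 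Substituting $\theta=1/W$, invoking $\Delta_gu=0$, and completing the square shows that $\phi:=\log W-\alpha u$ satisfies
\[
\Delta_g\phi\ \ge\ |\II|^2+|\nabla_g\phi|_g^2+2\alpha\langle\nabla_g\phi,\nabla_gu\rangle_g,
\]
so that $\Phi=e^\phi$ is a non-negative weak subsolution of the weighted-divergence inequality
\[
\diver_g\bigl(e^{-2\alpha u}\nabla_g\Phi\bigr)\ \ge\ |\II|^2 e^{-2\alpha u}\Phi\ \ge\ 0\qquad\text{on }\Omega.
\]

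Under hypothesis $(ii)$ the estimate \eqref{eq_bellissima} is then immediate from the standard boundary maximum principle for this weighted operator. Under $(i)$ the boundary can be non-compact, and this is where the Ahlfors-Khas'minskii duality announced in the abstract enters. The bound $\Ricc^M\ge-\kappa^2$ on $(M,\sigma)$ provides Bishop-Gromov volume control; pulled to $\Sigma$ via the area comparison $|\Sigma\cap\BB_r|_g\le 2|B_{2r}|_\sigma$ recalled in Remark \ref{rem_slow}, this translates into $g$-volume control on the graph, while the $\liminf$ hypothesis on $|\partial\Omega\cap B_r|$ supplies a compatible bound on the weighted perimeter. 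These ingredients are exactly what is needed to construct a Khas'minskii-type potential/exhaustion adapted to the weighted drift Laplacian $\Delta_g-2\alpha\,\langle\nabla_gu,\nabla_g\,\cdot\,\rangle_g$, and that potential, used as a barrier against $\Phi$, yields $\sup_\Omega\Phi\le\max\{1,\limsup_{\partial\Omega}\Phi\}$, the ``$1$'' being the natural trivial floor that applies when the boundary is empty or carries no useful data.

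Finally, if equality in \eqref{eq_entire} is attained at some interior $x_0\in M=\Omega$, then $\Phi(x_0)=1=\sup_M\Phi$, and the strong maximum principle for the weighted subsolution $\Phi$ forces $\Phi\equiv 1$; plugging this back into the refined inequality annihilates the $|\II|^2$-term, so $\Sigma$ is totally geodesic, and retracing the Jacobi step with equality gives $\Ricc^M(Du,Du)\equiv-\alpha^2|Du|^2$. Together with $\Phi\equiv 1$ and $|\II|\equiv 0$, this forces $Du\equiv 0$, hence $u$ constant and $\kappa=0$. \textbf{Main obstacle.} The technical heart is the maximum principle in case $(i)$: the Ricci lower bound lives on $(M,\sigma)$, yet the operator naturally arises on $(\Sigma,g)$, whose sectional curvature and volumes are a priori uncontrolled when $|Du|$ is large---indeed this is precisely what was sidestepped in Theorem \ref{teo_rss} by assuming a direct sectional bound on $M$. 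Replacing that hypothesis by the Ahlfors-Khas'minskii nonlinear potential theory, which transfers $\sigma$-geometric growth information into a parabolicity-type property for the weighted operator on $(\Sigma,g)$ without ever invoking $\Sect$, is the chief novelty of the argument.
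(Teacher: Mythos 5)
Your pointwise computation is correct and coincides with the paper's: the Jacobi equation for $1/W$ gives exactly \eqref{jacobi_forW}, your inequality for $\phi=\log W-\alpha u$ is equivalent to it, and the equality-case discussion via the strong maximum principle for the weighted subsolution is essentially sound (indeed slightly more direct than the paper's splitting argument). The genuine gap is in the heart of the theorem, namely why a maximum principle holds at all on a possibly unbounded $\Omega$ with $\Phi$ possibly unbounded. Your treatment of case $(ii)$ is not correct: hypothesis $(ii)$ does not make $\Omega$ relatively compact, so there is no ``standard boundary maximum principle'' to invoke --- $\Phi$ may approach its supremum along a divergent sequence, and ruling this out is precisely the difficulty. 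In the paper, $(ii)$ plays the same role as $(i)$: it is used, through a calibration argument in which the sign of $\langle\nu,\eta\rangle$ on the lateral boundaries of the level sets $\{u>\eps_j\}$ and $\{u<-\eps_j\}$ is favourable, to get the extrinsic area growth of the graph, hence stochastic completeness of a suitable completion; so case $(ii)$ requires exactly the same Ahlfors--Khas'minskii machinery as case $(i)$, not a softer argument.

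Even in case $(i)$ your sketch skips the two real obstacles. First, the graph over $\Omega$ (or over a superlevel set of $\Phi$) is an incomplete manifold with noncompact boundary, while stochastic completeness and the AK-duality are statements about complete boundaryless manifolds: the paper must first double the region along a thin collar of controlled volume (Lemma \ref{lem_stochest}) and then prove stochastic completeness of the glued manifold (Lemma \ref{lem_2}); moreover the area bound of Remark \ref{rem_slow} you quote is for entire graphs, whereas over a proper domain the calibration produces the boundary term $\int_{A_1}\langle\nu,\eta\rangle$, and it is exactly the perimeter hypothesis in $(i)$ that controls it. Second, $\Phi$ is not known to be bounded, so it cannot be tested directly against a Khas'minskii potential or a weak maximum principle at infinity. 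The paper's mechanism is a contradiction argument on the superlevel set $\Omega_\tau$ of $We^{-Cu}$ with a strictly supercritical exponent $C>\kappa\sqrt{m-1}$, using the auxiliary function $z=We^{-Cu}e^{-\eps\varrho_p}-\delta W$, whose superlevel set is relatively compact; the strict gap $C^2(1-2\eps')-(m-1)\kappa^2>0$, together with $W>1+\tau$ on that set (this, not a ``trivial floor'', is the actual role of the $1$ in the maximum), is what absorbs the error terms coming from $\Delta_g\varrho_p\le\eps-\|\nabla\varrho_p\|^2$. Working directly at the critical exponent $\alpha=\kappa\sqrt{m-1}$, as you do, leaves no room to absorb these errors, so the barrier argument as sketched does not close; one must prove the estimate for each $C>\kappa\sqrt{m-1}$ and then let $C\downarrow\kappa\sqrt{m-1}$.
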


\begin{remark}
\emph{For $\Omega \subset M$ open and $v : \Omega \ra \R$, we agree to write 
	\[
	\limsup_{x \to \partial \Omega} v \doteq \inf_{V \text{ open, } \overline{V} \subset \Omega} \left( \sup_{\Omega \backslash \overline{V}} v \right)
	\]
where $\overline{V}$ is the closure of $V$ in $M$. 
}
\end{remark}

\begin{remark}
\emph{It is interesting to compare Theorem \ref{teo_graph_soloricci_intro} with the corresponding result for  harmonic functions. By \cite[Thm. 2.21]{maririgolisetti_mono}, if $\Ricc \ge -(m-1)\kappa^2$ on $M$, a positive solution of $\Delta u =0$ on a domain $\Omega \subset M$ enjoys the bound
	\[
	\frac{|D u|}{(m-1)\kappa u} \le \max \left\{ 1, \limsup_{x \ra \partial \Omega} \frac{|D u(x)|}{(m-1)\kappa u(x)} \right\} \qquad \text{on } \, \Omega. 
	\]
In particular, entire solutions satisfy
	\[
	|Du| \le (m-1) \kappa u \qquad \text{on } \, M,
	\]
an estimate originally due to Cheng and Yau \cite{yau,chengyau}, and Li and Wang \cite{liwang}. Observe that, given any manifold $(T^{m-1}, \metric)$ with $\Ricc \ge 0$, the warped product
	\[
	M = T \times_{e^{\kappa r}} \R \qquad \text{with metric } \, \di r^2 + e^{2\kappa r} \metric
	\]
satisfies $\Ricc \ge -(m-1)\kappa^2$ and the function 
	\[
	u(r,y) = e^{-(m-1)\kappa r}
	\]
	is harmonic with $|Du| \equiv (m-1)\kappa u$. On the contrary, for minimal graphs we didn't succeed in providing non-constant examples of $u$ attaining the equality in \eqref{eq_entire} at some point, the reason being the difficulty to exploit the refined Kato inequality in the setting of minimal graphs. 
}
\end{remark}

We briefly comment on the proof of Theorem \ref{teo_graph_soloricci_intro}. Denote with $(\Sigma,g)$ the graph of $u$. The approach follows somehow that in \cite{rosenbergschulzespruck}: there, localization is performed by using the distance function $r$ from a fixed origin in the base manifold $M$, and as such it calls for estimating the Laplacian
	\[
	\Delta_g r = g^{ij} r_{ij}
	\]
in the graph metric $g$. As said, the lack of ellipticity of $\Delta_g$ with respect to the background metric $\sigma$ makes a Ricci lower bound unable to control $g^{ij}r_{ij}$. The key step in our proof, say for $\Omega = M$, is to replace $r$ with a function $\varrho$ that solves in the graph metric 
\begin{equation}\label{array_varrho_intro}
\left\{\begin{array}{l}
\varrho \ge 0, \qquad \varrho(x) \ra \infty \ \text{ as } \, x \ra \infty \\[0.2cm]
\Delta_g \varrho \le \eps - \|\nabla \varrho\|^2
\end{array}\right.
\end{equation}
for small enough $\eps>0$. The limit in the first line is equivalent to saying that $\varrho$ is an exhaustion, namely, that it has relatively compact sublevel sets. The construction of $\varrho$ relies on a duality principle recently discovered in \cite{marivaltorta,maripessoa,maripessoa_2}, called the \emph{Ahlfors-Khas'minskii duality} (AK-duality for short). Roughly speaking, the principle establishes that, for a large class of operators $\mathscr{F}$ including linear, quasilinear and fully nonlinear ones of geometric interest, a Liouville property for solutions $u$ of $\mathscr{F}[u] \ge 0$ that satisfy $\sup_M u < \infty$ is \emph{equivalent} to the existence of exhaustions $w$ satisfying $\mathscr{F}[w] \le 0$. In our case, the construction of $\varrho$ can be split into the following steps.
\begin{itemize}
\item[(a)] Since the graph $\Sigma$ is minimal, its geodesic balls have a mild growth at infinity (a standard calibration argument). In particular, in our assumptions
	\[
	\liminf_{r \ra \infty} \frac{\log |B_r|}{r^2} < \infty.
	\]
The latter is, by \cite{prsmemoirs}, a sufficient condition for $\Sigma$ to be a stochastically complete manifold, namely, for the Brownian motion on $M$ to be non-explosive (cf. \cite{grigoryan}).
\item[(b)] By \cite{prsmemoirs}, a stochastically complete manifold satisfies the weak maximum principle at infinity (in the sense of Pigola-Rigoli-Setti, \cite{prsmemoirs}), and in fact, the two properties are equivalent. In particular, the only entire solution of $\Delta_g u \ge \eps u$ that is non-negative and bounded from above is $u \equiv 0$ (this Liouville theorem has also been stated in \cite{grigoryan}).
\item[(c)] By the AK-duality, one can construct (many) exhaustions $w$ solving $\Delta_g w \le \eps w$ on $\Sigma$. 
\item[(d)] After some manipulation, $\rho = \log w$ solves \eqref{array_varrho_intro}.
\end{itemize} 

If $\partial \Omega$ is non-empty, we will localize the estimates on an open subset $\Omega' \subset \Omega$, but the argument goes, in principle, as in the boundaryless case. All that is required is the possibility to embed isometrically $\Omega'$ into an open set of a stochastically complete manifold, and this is the point where we need conditions $(i)$ or $(ii)$ in Theorem \ref{teo_graph_soloricci_intro}.\par

\vspace{0.2cm}	
	
The Bernstein property has the following corollary for prescribed mean curvature equations, notably including the capillarity equation, that is, the case $f(u) = cu$ with constant $c>0$. 

\begin{theorem}\label{teo_tkachev}
Let $(M,\sigma)$ be a complete manifold with $\Ricc \ge 0$. Fix an origin $o$ and let $r(x) = \mathrm{dist}(x,o)$. Then, for every $f \in C(\R)$ and $0 < b \in C(M)$ satisfying
	\[
	\begin{array}{lcl}
	b \ge C(1 + r)^{-\mu} \qquad \text{on } \, M, \\[0.2cm]
	f \not \equiv 0 \qquad \text{and non-decreasing on } \, \R, 
	\end{array}
	\]
for some constants $C>0$ and $\mu < 1$, the only solutions of 
	\begin{equation}\label{eq_nonhom}
	\diver \left( \frac{Du}{\sqrt{1+|Du|^2}} \right) = b(x) f(u)
	\end{equation}
are constants $c \in \R$ with $f(c) = 0$.	
\end{theorem}

\begin{remark}
\emph{As the example of affine functions on $\R^m$ shows, the requirement $f \not \equiv 0$ is essential.	
}
\end{remark}

On $\R^m$ and with $b \equiv 1$, the above result was first obtained by Tkachev \cite{tkachev}, see also Thm. 10.4 of Farina's survey \cite{farina_minimal}. Extensions were later found by Naito and Usami \cite[Thm. 1]{nu}, Serrin \cite[Thm. 3]{Serrin_4}, and Pucci and Serrin \cite[Thm. 8.1.3]{pucciserrin}, still in Euclidean space. On manifolds, an up-to-date account can be found in Sections 7.3, 8.2 and 10.6 in \cite{bmpr}. In particular, we stress the Liouville Theorems 2.29 and 2.32 there for equation \eqref{eq_nonhom}, that however are not sufficient to conclude the constancy of $u$ under the assumptions in Theorem \ref{teo_tkachev}.
			
\vspace{0.5cm}

\noindent \textbf{Acknowledgements.} We would like to thank Jorge H.S. de Lira, for fruitful conversations at various stages of this paper. We also thank I. Holopainen for useful comments after a first version of this work has been posted on arXiv.

\section{Preliminaries}

Let $M \times \R$ be given the product metric $\metric = \sigma + \di t^2$, with $\sigma$ the metric on $M$. Given $u \in C^2(M)$, we introduce the graph immersion $F : M \ra M \times \R$, $F(x) = (x,u(x))$, and we let $\Sigma$ be the graph of $F$, namely, $\Sigma$ is the manifold $M$ endowed with the pull-back metric $g = F^*\metric$. Norms with respect to $\sigma$ and $g$ will be denoted, respectively, by $|\cdot |$ and $\| \cdot \|$. If $\sigma_{ij}$ are the components of $\sigma$ in a local frame $\{e_i\}$ with dual coframe $\{\theta^i\}$, $1\le i,j \le m$, then 
 \begin{equation} \label{metric_graph}
	g_{ij}= \sigma_{ij}+u_iu_j, \qquad g^{ij} = \sigma^{ij} - \frac{u^iu^j}{W^2},
\end{equation}
where $\di u = u_i \theta^i$, $u^i = \sigma^{ij}u_j$ and 
$$
W = \sqrt{1+|Du|^2}.
$$ 
Also, the vectors $e_i$ are mapped to $E_i = e_i + u_i \partial_t$. We denote by $D,\nabla$ the connections of, respectively, $\sigma$ and $g$, with Christoffel symbols, respectively, $\gamma^k_{ij}$ and $\Gamma_{ij}^k$. A computation shows that 
$$
\Gamma^k_{ij} - \gamma^k_{ij} = \frac{u^k u_{ij}}{W^2},
$$
where, for a function $\phi: M \ra \R$, $\phi_{ij}$ are the components of the Hessian $D^2\phi$ in the metric $\sigma$. Since the upward pointing unit normal vector to the graph is
$$
\nu = \frac{\partial_t - u^i e_i}{W},
$$
a differentiation yields that the second fundamental form $\II$ in the $\nu$ direction has components 
\begin{equation}\label{seconffund}
\mathrm{II}_{ij} = \frac{u_{ij}}{W}.
\end{equation}
The mean curvature in the direction of $\nu$ is therefore
\begin{equation}\label{meancurv}
H = g^{ij}\mathrm{II}_{ij} = \diver \left( \frac{Du}{W}\right).
\end{equation}
Moreover, for every $\phi : M \to \R$, the components of $\nabla^2 \phi$ and $\Delta_g \phi$ in the graph metric $g$ can be written as
\begin{equation}\label{hessian_on_graph}
\left\{ \begin{array}{l}
\nabla^2_{ij} \phi = \phi_{ij} - \phi_ku^k \frac{u_{ij}}{W^2} \\[0.2cm]
\Delta_g \phi = g^{ij}\phi_{ij} - \phi_ku^k \frac{H}{W} = g^{ij}\phi_{ij},
\end{array}\right.
\end{equation}
where we used the minimality of $\Sigma$. In particular, 
\begin{equation}\label{hessian_on_graph_u}
\nabla^2_{ij} u = \frac{u_{ij}}{W^2} = \frac{\II_{ij}}{W}, \qquad \Delta_g u = \frac{H}{W} = 0.
\end{equation}
Since $\partial_t$ is a Killing field, the function $1/W = \langle \nu, \partial_t \rangle$ solves the Jacobi equation
\[
\Delta_g \left( \frac{1}{W} \right) + \Big( \|\II\|^2 + \overline{\Ricc}(\nu,\nu)\Big) \frac{1}{W} = 0,
\]
with $\overline{\Ricc}$ the Ricci curvature of $\bar M$. Using
\begin{equation}\label{nablau2}
\|\nabla u\|^2 = g^{ij} u_i u_j = \frac{W^2-1}{W^2},
\end{equation}
and 
	\[
	\overline{\Ricc}(\nu,\nu) = \Ricc \left(\frac{Du}{W},\frac{Du}{W}\right) \ge - \kappa^2(m-1) \frac{W^2-1}{W^2} 
	\]
we deduce 
	\begin{equation}\label{jacobi_forW}
	\Delta_g W \ge \left(\|\II\|^2 - (m-1)\kappa^2 \frac{W^2-1}{W^2}\right) W + 2 \frac{\|\nabla W\|^2}{W}.
	\end{equation}

\section{Proof of Theorem \ref{teo_graph_soloricci_intro}}\label{sec_proof}

We need some technical lemmas. Having fixed an origin $o \in (M,\sigma)$, let $r(x)$ denote the distance from $o$ and let $B_r$ be the geodesic ball of radius $r$ centered at $o$. Hereafter, we shall consider measures with respect to metrics $a$ different from $\sigma$, and we shall therefore denote with $B_r^a$ a geodesic ball in the metric $a$, and by $| \cdot |_a$ the Hausdorff measure induced by $a$. For notational convenience we do not specify the dimension, for instance, if $\Omega \subset M$ is an open subset, the notation $|\Omega|_a$ will denote its $m$-dimensional volume, while $|\partial \Omega|_a$ the $(m-1)$-dimensional Hausdorff measure.\par

We first need some lemmas about the possibility of isometrically including graphs with bounded mean curvature into complete, boundaryless manifolds with the same dimension and good properties.

\begin{lemma}\label{lem_stochest}
Let $(\Sigma,g)$ be a connected Riemannian manifold of dimension $m$. Let $\Sigma' \varsubsetneqq \Sigma$ be a connected open subset with smooth boundary whose closure $\overline{\Sigma'}$ in $\Sigma$ satisfies:
\begin{itemize}
\item[$(\wp)$] bounded subsets of $(\overline{\Sigma'},g)$ have compact closure in $\Sigma$.
\end{itemize}
Then, $(\overline{\Sigma'},g)$ can be isometrically embedded into a complete, connected $m$-dimensional Riemannian manifold $(N,h)$ in such a way that the geodesic balls $B^h_r$ centered at $o$ in $(N,h)$ satisfy 
$$
|B^h_r|_h \le 2|\Sigma' \cap B^g_{4r}|_g + 5 \pi 
$$
for every $r \ge R_0 \doteq 2 \mathrm{dist}_g(o, \partial \Sigma') + 2$. Here, $\mathrm{dist}_g$ is the distance function in $(\Sigma,g)$ and $B^g_{4r}$ is the metric ball in $(\Sigma,g)$ of radius $4r$.
\end{lemma}

\begin{remark}\label{rem_notbad}
	\emph{Here, as usual, we say that a subset $\Sigma' \subset \Sigma$ has smooth boundary if its closure $\overline{\Sigma'}$ in $\Sigma$ is a smooth manifold with boundary, and if, for each $y \in \partial \Sigma'$, there exists a chart $(U,\varphi) \subset \Sigma$ with $U$ diffeomorphic to $\R^m$ and such that $\varphi(y) = 0$ and $\varphi(\overline{\Sigma'})$ is a closed half-space.}
\end{remark}

\begin{remark}\label{rem_comple}
\emph{Observe that $(\wp)$ is equivalent to ask that bounded subsets of $(\overline{\Sigma'},g)$ have compact closure in $\overline{\Sigma'}$. Hence, denoting with $\mathrm{dist}_g'$ the distance induced by the metric $g$ on $\overline{\Sigma'}$, by the Heine-Borel Theorem $(\wp)$ can also be rephrased as the completeness of $(\overline{\Sigma'}, \mathrm{dist}_g')$.}
\end{remark}

\begin{proof}
We construct a suitable double of $(\Sigma',g)$. Denote for convenience with $(S,g_0)$ the boundary $\partial \Sigma'$ of $\overline{\Sigma'}$ with the induced metric. First, observe that $(\wp)$ ensures the completeness of $(S,g_0)$. Indeed, a Cauchy sequence $\{y_k\} \subset (S,g_0)$ is also Cauchy in $\overline{\Sigma'}$ with the distance $\mathrm{dist}_{g}'$ induced from $g$. Hence, by $(\wp)$, $y_k \to y \in \overline{\Sigma'}$. Remark \ref{rem_notbad} guarantees that $S \hookrightarrow \overline{\Sigma'}$ is an embedding with closed image, so $y\in\partial \Sigma'$ and $y_k \to y$ also in $(S,g_0)$.

Let $\nu$ be the exterior normal of $(\Sigma',g) \hookrightarrow (\Sigma,g)$, let $\rho$ be the intrinsic distance from an origin $\bar o \in S$ with respect to the metric $g_0$, and denote with $B_r^S$ the corresponding geodesic balls. Let $t_0 : [0,\infty) \ra (0,\frac{1}{2})$ be a continuous function satisfying the following assumptions:
\begin{itemize}
\item[($p_1$)] The normal exponential map $\Psi(y,r) = \exp_y(r \nu(y))$ is a diffeomorphism in the tubular neighbourhood
$$
\mathscr{D} \doteq \Big\{ (y,r) \in S \times [0, \infty) \ : \ r < 2t_0(\rho(y)) \Big\},
$$
with $\Psi(\mathscr{D}) \subset \Sigma$.
\item[($p_2$)] Writing the pull-back metric on $\mathscr{D}$ as $g = \di r^2 + g_r(y)$, and letting $b = \di r^2 + g_0(y)$ denote the product metric on $\mathscr{D}$, $t_0$ satisfies the pointwise inequality
$$
t_0(\tau) < \frac{1}{(1+\tau^2)|\partial B_{\tau}^S|_{g_0}} \left[ \max_{(y,r) \in B_\tau^S \times [0,t_0(\tau)]} \left(\frac{\|g_r\|_{g_0}}{\sqrt{m-1}}\right)^{\frac{m-1}{2}} \right]^{-1},
$$
and it is small enough in such a way that $g_r(y) \ge \frac{1}{4}g_0(y)$ for each $y \in S$ and $r < 2 t_0(\tau)$.
\end{itemize}
Let $\varphi \in C^\infty(\overline{\mathscr{D}})$ satisfy 
$$
0 \le \varphi \le 1, \qquad \varphi \equiv 1 \ \text{on } \, \{r \le t_0(\rho(y))\}, \qquad \varphi \equiv 0 \ \text{ on } \, \left\{ r \ge \frac{3}{2}t_0(\rho(y))\right\},
$$
and choose a smooth function $\eta : S \times [0,1] \ra (0,1]$ such that 
$$
\left\{ \begin{array}{ll}
\eta(y,r) = 1 & \quad \text{if } \, r < t_0(\rho(y)) \ \text{ or } \, r > 1-t_0(\rho(y)), \\[0.2cm]
0 < \eta(y,r) \le t_0(\rho(y))^{2}
& \quad \text{if } \, r \in [2t_0(\rho(y)), 1- 2t_0(\rho(y))]. 
\end{array}\right.
$$
We consider the following metric on $P = S \times [0,1]$:
$$
a \ \ \doteq \ \ \eta(y,r)^2\di r^2 + \Big[ \varphi(y,r) g_r(y) + \big(1-\varphi(y,r)\big)g_0(y)\Big].
$$
Then, $a$ coincides with $g$ in a neighbourhood of $S \times\{0\}$ and with the product metric on $S \times \R$ in a neighbourhood of $S \times \{1\}$. We claim that $(P,a)$ has finite volume. Indeed, by the inequality between the geometric and quadratic means, the volume density of $a$ satisfies 
$$
\begin{array}{lcl}
\disp \mathrm{det}_{g_0}(a)^{\frac{1}{m-1}} & \le & \disp \eta^{\frac{1}{m-1}} \Big[ \mathrm{det}_{g_0} (\varphi g_r + (1-\varphi)g_0) \Big]^{\frac{1}{m-1}} \le \eta^{\frac{1}{m-1}} \frac{\|\varphi g_r + (1-\varphi) g_0\|_{g_0}}{\sqrt{m-1}} \\[0.4cm]
& \le & \disp \eta^{\frac{1}{m-1}} \frac{\max\{ \|g_r\|_{g_0}, \|g_0\|_{g_0} \}}{\sqrt{m-1}} = \eta^{\frac{1}{m-1}}  \max \left\{ \frac{\|g_r\|_{g_0}}{\sqrt{m-1}}, 1 \right\}  
\end{array}
$$
and thus, letting $P_R = B_R^S \times [0,1]$ and noticing that $\int_0^1 \sqrt{\eta(y,r)} \di r \le 5 
t_0(\rho(y))$ we use the coarea formula to compute
$$
\begin{array}{lcl}
\disp |P_R|_g & \le & \disp \int_0^R \di \tau \left[\int_{y \in \partial B_\tau^S} \int_{r \in [0, 1]} \sqrt{\mathrm{det}_{g_0}(a)} \di y \di r \right] \\[0.4cm]
& \le & \disp \int_0^R \di \tau \left\{ \max_{(y,r) \in B_\tau^S \times [0,t_0(\tau)]} \left(\frac{\|g_r\|_{g_0}}{\sqrt{m-1}}\right)^{\frac{m-1}{2}} \int_{y \in \partial B_\tau^S}\int_0^1 \sqrt{\eta(y,r)} \di r \di y \right\} 
 \\[0.4cm]
& \le & \disp \int_0^R \frac{5	\di \tau}{1+\tau^2} \le \frac{5\pi}{2}.
\end{array}
$$
For $j \in \{1,2\}$, let $\Sigma_j', P_j$ be two isometric copies of, respectively, $\Sigma'$ and $P$. The metric $g$ on $\overline{\Sigma_j'}$ matches smoothly with $a$ on the union $N_j = \Sigma_j' \cup P_j$ along $\partial \Sigma_j' = S \times \{0\}$, generating a metric $h_j$. Define $(N,h)$ to be the gluing of $(N_1,h_1)$ with $(N_2,h_2)$ along their totally geodesic boundaries. In view of $(\wp)$, observe that the inclusion $S \hookrightarrow \partial \Sigma_j' \subset N$ is an embedding with closed image.\par
We shall prove that $(N,h)$ is complete. To this aim, consider $P_1\cup P_2 \subset N$ as being parametrized by $S\times[0,2]$, with $P_1 = S \times [0,1]$ and $P_2 = S \times [1,2]$, and let $\pi : P_1 \cup P_2 \to S$ be the canonical projection onto the first factor. Also let $f : \overline{\Sigma'_1} \to \overline{\Sigma'_2}$ be the Riemannian isometry sending each point of $\overline{\Sigma'_1}$ to its copy in $\overline{\Sigma'_2}$, and define the retraction $F : N \to \overline{\Sigma'_1}$ by setting
	\[
	F(x) = \begin{cases}
		x & \text{if } \, x \in \Sigma'_1, \\
		\pi(x) & \text{if } \, x \in P_1 \cup P_2, \\
		f^{-1}(x) & \text{if } \, x \in \Sigma'_2.
	\end{cases}
	\]
First, observe that $F$ is proper: indeed, for every compact set $K\subseteq \overline{\Sigma_1'}$,
	\[
	F^{-1}(K) = K \cup \Big( (K\cap S)\times[0,2] \Big) \cup f(K)
	\]
is compact, being a finite union of compact sets. We also claim that $F$ is $2$-Lipschitz. To this aim, let $x,y\in N$ and $\eps>0$. We show that $\dist_g'(F(x),F(y)) < 2\dist_h(x,y) + 2\eps$. Let $\gamma : [0,T] \to N$ be a curve joining $x$ and $y$ and such that $\ell_h(\gamma) < \dist_h(x,y) + \eps$. By the transversality theorem, we can assume that $\gamma$ is transversal to $\partial(P_1 \cup P_2)$, so in particular $F \circ \gamma : I \to \overline{\Sigma_1'}$ is a piecewise smooth curve joining $F(x)$ and $F(y)$ and there exist $0 = s_0 < s_1 < s_2 < \dots < s_k = T$ such that, letting $I_j = (s_j,s_{j+1})$, for each $0 \leq j \leq k-1$
\[
\gamma(I_j) \subseteq \mathrm{Int}(P_1 \cup P_2) \quad \text{ or } \quad \gamma(I_j) \subseteq \Sigma_1' \cup \Sigma_2'
\]
and for each $0 \leq j \leq k-2$ the images $\gamma(I_j)$, $\gamma(I_{j+1})$ belong to distinct components of $N\setminus\partial(P_1 \cup P_2)$.
If $\gamma(I_j) \subseteq \Sigma_1' \cup \Sigma_2'$ then
\[
\ell_g((F\circ\gamma)_{|I_j}) = \ell_h(\gamma_{|I_j}).
\]
On the other hand, assume that $\gamma(I_j) \subseteq \mathrm{Int}(P_1 \cup P_2)$. Because of $(p_2)$ in the definition of $t_0$,
	\[
	h = a \ge \eta(y,r)^2\di r^2 + \frac{1}{4}g_0(y) \qquad \text{on } \, P_1 \cup P_2, 
	\]
	so for every tangent vector $V \in T(P_1\cup P_2)$ we have
	\[
	g(\pi_\ast V,\pi_\ast V) = g_0(\pi_\ast V,\pi_\ast V) \leq 4 h(V,V),
	\]
	hence
	\[
	\ell_g((F\circ\gamma)_{|I_j}) \leq 2 \ell_h(\gamma_{|I_j}).
	\]
Summarizing,
	\[
	\dist_g'(F(x),F(y)) \leq \ell_g(F\circ\gamma) \leq 2 \ell_h(\gamma) < 2 \dist_h(x,y) + 2\eps
	\]
	as claimed. We prove that $(N,h)$ is complete. Let $\{x_k\}$ be a Cauchy sequence in $(N,\dist_h)$. Then, $\{F(x_k)\}$ is a Cauchy sequence in $(\overline{\Sigma_1'},\dist_g')$ and therefore, by $(\wp)$, it converges to some $y \in \overline{\Sigma_1'}$. The properness of $F$ implies that $\{x_k\}$ has a limit point in $N$, hence it converges.
	
	We next observe that 
	\begin{equation}\label{eq_claim_dist}
		\frac{1}{2} \mathrm{dist}_g \le \frac{1}{2} \mathrm{dist}'_g \le \mathrm{dist}_h \le \mathrm{dist}'_g \qquad \text{on } \, \Sigma_1' \times \Sigma_1'.
	\end{equation}
	Let $x,y \in \Sigma_1'$. The first and last inequalities are obvious since $(N,h)$ and $(\Sigma, g)$ contain more curves joining $x$ to $y$ than $(\overline{\Sigma'},g)$, while the middle inequality is a consequence of $2$-Lipschitzianity of $F$, together with $F_{|\Sigma_1'} = \mathrm{id}_{\Sigma_1'}$.
	\par
	To conclude, let $o \in \Sigma'$, denote by $o_j$ the corresponding copy on $\Sigma_j'$, let $B^h_r(o_1)$ be the geodesic ball of radius $r$ centered at $o_1$ in $(N,h)$, and let $R = \mathrm{dist}_h(o_1,o_2)$. Choose a shortest curve $\gamma: I \to \Sigma$ joining $o$ to $\partial \Sigma'$, in particular $\gamma(I) \subset \overline{\Sigma'}$, let $y \in S$ be the ending point of $\gamma$ and let $\gamma_j$ be the copy of $\gamma$ in $\overline{\Sigma_j'}$. We denote with $\tau : [0,2] \to P_1 \cup P_2$ the curve $t \mapsto (y,t)$, and observe that $\ell_h(\tau) \le 2$ because $\eta \le 1$. By our requirement on $R_0$,
	\[
	R \le \ell_h(\gamma_2^{-1} \ast \tau \ast \gamma_1) \le 2 \mathrm{dist}_g(o, \partial \Sigma') + 2 = R_0 \le r,
	\]
and thus, because of \eqref{eq_claim_dist},
	\[
	\begin{array}{lcl}
		\disp |B^h_r|_h & \le & \disp |B^h_r(o_1) \cap \Sigma'_1|_g + |B_r^h(o_1) \cap (P_1 \cup P_2)|_a + |B_r^h(o_1) \cap \Sigma'_2|_g \\[0.2cm]	
		& \le & \disp |B^h_r(o_1) \cap \Sigma'_1|_g + 2 |P|_a  + |B_{r+R}^h(o_2) \cap \Sigma'_2|_g \\[0.2cm]
		& \le & \disp 2 |B_{r+R}^h(o) \cap \Sigma'|_g + 5\pi \le 2 |B_{2r}^h(o) \cap \Sigma'|_g + 5\pi \\[0.2cm]
		& \le & \disp 2 |B_{4r}^g(o) \cap \Sigma'|_g + 5\pi,	
	\end{array}
	\]
	as required.
\end{proof}

\begin{lemma}\label{lem_2}
Let $\Omega \subset M$ be a connected open subset of a complete manifold $(M^m,\sigma)$, and let $u \in C^2(\Omega)$ solve $\MM[u] = H$ for some $H \in C(\Omega)$.\par
Then, given every connected subset $\Omega'$ with smooth boundary (in the sense of Remark \ref{rem_notbad}) whose closure in $M$ satisfies $\overline{\Omega'} \subset \Omega$, the graph $\Sigma'$ of $u$ over $\Omega'$ satisfies property $(\wp)$ in Lemma \ref{lem_stochest}. Furthermore, assume that
\begin{itemize}
	\item[-] $M$ satisfies
	\[ 
	\Ricc(Dr, Dr) \ge - \kappa^2(1+r^2) 
	\]
where $\kappa \in \R^+$, $r$ is the distance in $M$ from a fixed origin $\bar o \in \Omega'$, and the inequality holds outside of the cut-locus of $\bar o$; 
	\item[-] $H \in L^\infty(\Omega)$,
\end{itemize}
and that either of the following two properties holds:
	\begin{itemize}
	\item[$(i)$] $\Omega$ has locally Lipschitz boundary 
	and
$$
\liminf_{r \ra \infty} \frac{\log |\partial \Omega \cap B_r(\bar o)|_\sigma}{r^2} < \infty;
$$
	\item[$(ii)$] $u \in C(\overline \Omega)$ and is constant on $\partial \Omega$.
	\end{itemize}
Then, the manifold $(N,h)$ constructed in Lemma \ref{lem_stochest} is stochastically complete.
\end{lemma}	
	
\begin{proof}
Let $\pi : \Sigma \to \Omega$ the diffeomorphism given by the restriction of the projection $M \times \R \to M \times \{0\}$ to $\Sigma$. Observe that $\Sigma'$ is a subset with smooth boundary in $\Sigma$, and that its closure $\overline{\Sigma'}$ in $\Sigma$ coincides with $\pi^{-1}(\overline{\Omega'})$. To check that $(\wp)$ in Lemma \ref{lem_stochest} is satisfied, let $U \subset (\overline{\Sigma'},g)$ be bounded in the metric induced by $g$ on $\overline{\Sigma'}$. Since $\pi$ does not increase distances, $\pi(U)$ is bounded in $\overline{\Omega'}$ with the distance induced by $\sigma$, in particular, $\pi(U)$ is bounded in $M$. The completeness of $M$ ensures that $\overline{\pi(U)} \subseteq \overline{\Omega'}$ is compact in $M$, hence in $\overline{\Omega'}$. Pulling back with $\pi$, we conclude that $U$ has compact closure in $\overline{\Sigma'}$, as claimed (cf. Remark \ref{rem_comple}). Lemma \ref{lem_stochest} is therefore applicable. We then use a criterion in Section 4 of \cite{prsmemoirs}, according to which $(N,h)$ is stochastically complete whenever 
	\begin{equation}\label{eq_stoch}
	\liminf_{r \ra \infty} \frac{\log |B_r^h|_h}{r^2} < \infty,
	\end{equation}
and a calibration argument (together with Lemma \ref{lem_stochest}) to estimate $|B_r^h|_h$. Write $B_r$ instead of $B_r(\bar o)$ on $M$ for notational convenience, let $| \cdot |$ be the Hausdorff measure of the appropriate dimension on $M \times \R$ induced by the product metric, and denote by $\BB_r$ a geodesic ball in $M \times \R$ with center $o = (\bar o, u(\bar o))$. Fix a sequence $\{\Omega_j\}$ of open sets with smooth boundary satisfying $\overline{\Omega_j} \subset \Omega$, to be specified later, and let $\Sigma_j$ be the graph over $\Omega_j$. Consider the cylinder $C_r = (B_r \cap \Omega_j) \times I_r$, with $I_r = [u(\bar o) -r,u(\bar o)+ r]$. Note that 
$$ 
|\Sigma_j \cap B_r^g|_g \le |\Sigma_j \cap \BB_r| \le |\Sigma_j \cap C_r|,
$$
where recall that $B_r^g$ is a metric ball centered at $o$ in $(\Sigma,g)$. Extend the upward-pointing normal vector $\nu$ of $\Sigma$ in a parallel way along the $t$ direction to a field on the entire $\Omega \times \R$, and note that $\overline{\diver}(\nu) = -H \le \|H\|_\infty$. Let $U_r$ be the union of the connected components of $C_r \backslash \Sigma_j$ lying below $\Sigma_j$. By Sard's theorem for the distance function, we can choose $r$ in such a way that $B_r \cap \Omega_j$ has Lipschitz boundary for every $j$. Then, denoting by $\eta$ the exterior normal to $U_r$,
\[
\|H\|_\infty |B_r \times I_r| \ge \disp \|H\|_\infty |U_r| \ge \int_{U_r} \overline{\diver}(\nu) = \int_{\partial U_r} \langle \nu, \eta \rangle = |\Sigma_j \cap C_r| + \int_{\partial U_r \backslash \Sigma} \langle \nu, \eta \rangle.
\]
The set $\partial U_r \setminus \Sigma$ consists of the union of the connected components of $(\partial \Omega_j \cap B_r) \times I_r$ lying below $\Sigma$ and of a subset of $\partial (B_r \times I_r)$. We split the integral of $\langle \nu, \eta \rangle$ over $\partial U_r \setminus \Sigma$ as the sum of the two integrals over these complementary subsets, namely 
\[
\begin{array}{lcl}
	A_1 & = & (\partial U_r \setminus \Sigma) \cap [(\partial \Omega_j \cap B_r) \times I_r], \\[0.2cm]
	A_2 & = & (\partial U_r \setminus \Sigma) \setminus A_1 \subseteq \partial (B_r \times I_r)
\end{array}
\]
and we estimate
\[
\left| \int_{A_2} \langle \nu, \eta \rangle \right| \leq \int_{A_2} |\langle \nu, \eta \rangle| \leq |A_2| \leq |\partial (B_r \times I_r)|
\]
to obtain
\[
\|H\|_\infty |B_r \times I_r| \ge \disp |\Sigma_j \cap C_r| + \int_{A_1} \langle \nu, \eta \rangle - |\partial(B_r \times I_r)|.
\]
Summing up, 
\[
|\Sigma_j \cap B_r^g|_g \le |\partial (B_r \times I_r)| + \|H\|_\infty |B_r \times I_r| - \int_{A_1} \langle \nu, \eta \rangle. 
\]
Using the volume comparison theorem, the lower bound on $\Ricc(Dr,Dr)$ guarantees that
	\[
	\left\{ \begin{array}{l}
	|\partial (B_r \times I_r)| = \disp 2r |\partial B_r|_\sigma + 	2|B_r|_\sigma \le 2r v(r) + 2V(r), \\[0.2cm]
	|B_r \times I_r| \le 2r V(r),
	\end{array}\right.
	\]
with $v(r),V(r)$ the volume of a sphere and ball of radius $r$ in a rotationally symmetric manifold of curvature $-\kappa^2(1+ r^2)$, which particular satisfy (cf. Chapter 2 in \cite{prs})
	\[
	v(r) + V(r) \le C_1 \exp\big\{ C_2 r^2\big\}, 
	\]
	for some constants $C_j = C_j(m,\kappa)>0$. We therefore obtain 
	\begin{equation}\label{calibration}
	|\Sigma_j \cap B_r^g|_g \le C_3(1+ \|H\|_\infty)r \exp\{C_2 r^2\} - \int_{A_1} \langle \nu, \eta \rangle.
	\end{equation}
\par
We now split the argument according to whether $(i)$ or $(ii)$ holds. If $(i)$ is in force, we choose $\{\Omega_j\}$ in such a way that $\overline{\Omega'} \subset \Omega_j$ and the perimeter measure of $\partial \Omega_j$ locally converges to that of $\partial \Omega$ (this is possible, see Remark \ref{rem_regu} below). By Cauchy-Schwarz inequality we estimate
$$
\left| \int_{A_1} \langle \nu, \eta \rangle \right| \leq |A_1| \leq |(\partial \Omega_j \cap B_r) \times I_r| = 2r|\partial \Omega_j \cap B_r|_\sigma,
$$
hence from \eqref{calibration} we deduce 
	\[
	\begin{array}{lcl}
	|\Sigma' \cap B_r^g|_g & \le & |\Sigma_j \cap B_r^g|_g \le C_3(1+ \|H\|_\infty)r \exp\{C_2 r^2\} + 2r|\partial \Omega_j \cap B_r|_\sigma \\[0.2cm]
	& \ra & \disp C_3(1+ \|H\|_\infty)r \exp\{C_2 r^2\} + 2r |\partial \Omega \cap B_r|_\sigma
	\end{array}
	\]
as $j \ra \infty$. By Lemma \ref{lem_stochest}, for large $r$
	\[
	|B_r^h|_h \le 16 r |\partial \Omega \cap B_{4r}|_\sigma + 5\pi + C_5(1+ \|H\|_\infty) r \exp\{ C_6 r^2\},
	\]
	whence
	\[
	\liminf_{r \ra \infty} \frac{\log |B_r^h|_h}{r^2} \le C_6 + \frac{1}{16} \liminf_{r \ra \infty} \frac{\log |\partial \Omega \cap B_r|_\sigma}{r^2} < \infty,
	\]
that was to be proved.\par
Next, we consider assumption $(ii)$, so without loss of generality assume $u=0$ on $\partial \Omega$ and that $u$ is non-constant. Suppose that $\{u>0\} \neq \emptyset$, and set $\Omega_j = \{ u > \eps_j\}$, with $\eps_j \downarrow 0$ chosen in such a way that $\Omega_j$ has smooth boundary. Notice that $\eta = - Du/|Du|$ on $\partial \Omega_j$, thus $\langle \nu, \eta\rangle \ge 0$ and \eqref{calibration} yields to 
	\begin{equation}\label{calibration2}
	|\Sigma_j \cap B_r^g|_g \le C_3(1+ \|H\|_\infty)r \exp\{C_2 r^2\}.
	\end{equation}
Letting $j \ra \infty$, we infer that the graph of $\{u>0\}$ in $\Sigma$ satisfies	
	\[
	|\{u> 0\} \cap B^g_r|_{g} \le C_3(1+ \|H\|_\infty)r\exp\{ C_2 r^2\}.
	\]
On $\{u<0\}$, we apply the above procedure to the graph of $-u$ and deduce 
	\[
	|\{u < 0\} \cap B^g_r|_{g} \le C_3(1+ \|H\|_\infty)r\exp\{ C_{2} r^2\}
	\]
(we here used that, when written via the map $\pi : \Sigma \to \Omega$, $B_r^g$ is the same ball both for the graph of $u$ and for the graph of $-u$).
On the other hand, by Stampacchia's theorem $Du_j = 0$ a.e. on $\{u_j = 0\}$, hence $g = \sigma$ a.e. on $\{u_j=0\}\cap \Omega$, that implies 
	\[
	|\{u=0\} \cap B_r^g|_g \le |\{u = 0\} \cap \Omega \cap C_r| \le |B_r|_\sigma \le C_1 \exp\{ C_2 r^2\}. 
	\]
Summarizing, 
	\[
	|\Sigma' \cap B_r^g|_g \le C_5(1+ \|H\|_\infty) r\exp\{ C_6 r^2\},
	\]
and we deduce as in $(i)$ the validity of \eqref{eq_stoch}.	
	\end{proof}

\begin{remark}\label{rem_regu}
\emph{The regularity of $\partial \Omega$ required in $(i)$ of Lemma \ref{lem_2} is only needed to guarantee the existence of smooth open sets $\{\Omega_j\}$, with $\overline{\Omega'} \subset \Omega_j \subset \Omega$, such that, on each compact set $K$, the $(m-1)$-dimensional Hausdorff measure $|\partial\Omega_j \cap K|$  converges to $|\partial\Omega \cap K|$. If $\partial \Omega$ is locally Lipschitz, to construct $\Omega_j$ one can apply in a local chart an interior approximation result for Lipschitz regular open subsets of $\R^m$, see for instance \cite{dok76}, and then a covering argument on $K$ together with a diagonal argument for $K$ exhausting $M$. In recent years, several authors investigated the existence of such a smooth interior approximation for less regular $\Omega$. In particular, we mention the following result from \cite{schmidt}: if $\Omega\subseteq \R^m$ is a bounded set with finite perimeter, whose perimeter measure equals the $(m-1)$-Hausdorff measure of the topological boundary $\partial \Omega$, then there exists a sequence of open smooth subsets $\Omega_j \subseteq \Omega$ such that $\partial\Omega_j \to \partial\Omega$ in the Hausdorff distance and $|\partial\Omega_j| \to |\partial\Omega|$ as $j\to\infty$.
}
\end{remark}

\begin{lemma}[\textbf{AK-duality}]\label{lem_varrhop}
Let $(N,h)$ be a stochastically complete manifold. Then, for every small $\eps>0$ and every $p \in N$, there exists a function $\varrho_p \in C^\infty(N)$ satisfying 
$$
\left\{\begin{array}{l}
\varrho_p \ge 0, \qquad \varrho_p(x) = 0 \ \text{ if and only if } \, x = p, \\[0.2cm]
\varrho_p(x) \ra \infty \ \text{ as } \, x \ra \infty, \\[0.2cm]
\Delta_h \varrho_p \le \eps - \|\nabla \varrho_p\|_h^2 \qquad \text{on } \, N.
\end{array}\right.
$$
\end{lemma}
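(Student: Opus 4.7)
\smallskip\noindent\textbf{Proof plan.}
The plan is to transfer the stochastic completeness of $(N,h)$ into the existence of a Khas'minskii-type exhaustion for the shifted Laplacian $\Delta_h - \eps$, and then reduce the stated differential inequality to a linear one via a logarithmic substitution.

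First, by the Pigola--Rigoli--Setti theory already recalled in step (b) of the Introduction, stochastic completeness of $(N,h)$ is equivalent to the weak maximum principle at infinity for $\Delta_h$. As a consequence, for every $\eps>0$ the only non-negative bounded function $u \in C^2(N)$ satisfying $\Delta_h u \ge \eps u$ is $u \equiv 0$. This is the Liouville-type input that will power the duality step.

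Next, I would invoke the Ahlfors--Khas'minskii duality of \cite{marivaltorta, maripessoa, maripessoa_2} applied to the operator $\Delta_h - \eps$: the above Liouville property is equivalent to the existence of smooth Khas'minskii exhaustions, i.e.\ functions $w \in C^\infty(N)$ with $w \ge 1$, $w(x) \to \infty$ as $x \to \infty$, and $\Delta_h w \le \eps w$ on $N$. The construction in the cited papers is sufficiently flexible to prescribe the location of the minimum: by concatenating a small local barrier built from a function of the Riemannian distance from $p$ (which satisfies $\Delta_h w \le \eps w$ on a sufficiently small ball, since $\eps$ is small) with the global potential produced by the duality, and then smoothing the junction via a standard regularization that preserves the inequality, one may further arrange $w(p) = 1$ and $w(x) > 1$ for every $x \neq p$.

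Finally, I would set $\varrho_p \doteq \log w$. Then $\varrho_p \in C^\infty(N)$, $\varrho_p \ge 0$ with equality if and only if $x = p$, and $\varrho_p(x) \to \infty$ as $x \to \infty$. A direct computation gives
\[
\Delta_h \varrho_p \;=\; \frac{\Delta_h w}{w} \;-\; \frac{\|\nabla w\|_h^2}{w^2} \;=\; \frac{\Delta_h w}{w} \;-\; \|\nabla \varrho_p\|_h^2 \;\le\; \eps \;-\; \|\nabla \varrho_p\|_h^2,
\]
which is precisely the differential inequality in the statement.

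The main obstacle is the AK-duality step itself, whose proof in the cited references is genuinely non-trivial (it constructs $w$ by a careful concatenation/limiting procedure in which the Liouville property plays the role of the dual input). Here it is used as a black box; the only extra care is the pole-at-$p$ normalization, which is a purely local modification and preserves both the exhaustion character of $w$ and the inequality $\Delta_h w \le \eps w$.
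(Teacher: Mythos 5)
Your overall route coincides with the paper's: use stochastic completeness (equivalently, the weak maximum principle for $\Delta_h$, hence the Liouville property for bounded non-negative solutions of $\Delta_h u \ge \eps u$) to obtain, via the Ahlfors--Khas'minskii duality applied to $\Delta_h - \eps$, an exhaustion $w$ with $\Delta_h w \le \eps w$, arrange $w \ge 1$ with equality exactly at $p$, and set $\varrho_p = \log w$; the final logarithmic computation is identical to the paper's. Two remarks on what you take as a black box: the duality in the cited references produces $w$ only on the complement of a small ball and only lower semicontinuous (in fact locally Lipschitz), so global smoothness must come from Greene--Wu approximation rather than from the duality itself; moreover, the paper observes that by linearity one can bypass the full duality machinery, building $w$ as a series $\sum_k u_{j(k)}$ of solutions of the Dirichlet problems $\Delta_h u_j = \eps u_j$ on $\Omega_j \setminus B_{4\eps}$ with data $0$ and $1$, the monotone limit of the $u_j$ being $0$ precisely by stochastic completeness. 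These are presentational differences, not gaps.

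The one step that would fail as written is your pole normalization. A function of the distance $r_p$ does not satisfy $\Delta_h \le \eps\,\cdot$ near $p$ ``since $\eps$ is small'': for instance $\Delta_h(1+r_p^2) \ra 2m$ as $r_p \ra 0$, which exceeds $\eps(1+r_p^2) \approx \eps$ exactly when $\eps$ is small, so smallness of $\eps$ works against you; a distance-based barrier only works if its Hessian scale is tuned to $\eps$ (e.g. $1+\delta r_p^2$ with $\delta$ small compared to $\eps/m$ on a small enough ball). Likewise, a cutoff interpolation between a local barrier and $w$ does not automatically preserve the differential inequality on the transition annulus, because of the terms involving $\Delta_h\psi$ and $\nabla\psi$. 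The paper's fix is worth recording: glue $\bar w = \psi(1+r_p^2)+(1-\psi)w$ accepting only the weaker bound $\Delta_h \bar w \le C$ on the ball, then renormalize affinely, $z = (\bar w + C/\eps)/(1+C/\eps)$, which restores $\Delta_h z \le \eps z$ on all of $N$ (using $z \ge 1$), keeps $z=1$ exactly at $p$ and $z(x) \ra \infty$ as $x \ra \infty$; then $\varrho_p = \log z$ concludes. With this correction (or an equivalent tuned barrier together with a genuine gluing argument, e.g. minima of supersolutions before smoothing), your proof is complete and matches the paper's.
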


\begin{proof}
Consider the ball $B_\eps$ centered at $p$ in the metric $h$, with $\eps$ small enough that $B_{4\eps}$ is regular. By the AK-duality \cite{marivaltorta,maripessoa,maripessoa_2}, since $(N,h)$ is stochastically complete we can find a function $w \in \mathrm{LSC}(\overline{N \backslash B_\eps})$ solving 
$$
\left\{ \begin{array}{l}
w > 0 \quad \text{on } \, N \backslash B_\eps,  \qquad w(x) \ra \infty \ \text{ as } \, x \ra \infty, \\[0.2cm]
\Delta_h w \le \eps w \qquad \text{on } \, N \backslash \overline{B}_\eps.
\end{array} \right.
$$
The construction in the above references makes use of solutions of obstacle problems, that are carefully stacked to produce $w$, and in fact, in our setting, $w$ can be proved to be locally Lipschitz. However, the linearity of $\Delta_h$ allows to considerably simplify the method: fix an exhaustion $\Omega_j \uparrow N$ by means of relatively compact, smooth open sets with $B_{4\eps} \Subset \Omega_1$, and for each $j$ let $u_j$ solve
$$
\left\{ \begin{array}{l}
\Delta_h u_j = \eps u_j \qquad \text{on } \, \Omega_j\backslash B_{\eps}, \\[0.2cm]
u_j = 0 \quad \text{on } \, \partial B_{\eps}, \qquad u_j = 1 \quad \text{on } \, \partial \Omega_j.
\end{array}\right.
$$
The maximum principle implies that $0 < u_j < 1$ on $\Omega_j \backslash B_{\eps}$, thus the extension of $u_j$ obtained by setting $u_j =1$ on $N \backslash \Omega_j$ is a locally Lipschitz, weak solution of $\Delta_h u_j \le \eps u_j$ on $N \backslash \overline{B}_{\eps}$. By elliptic estimates, $u_j \downarrow u$ locally smoothly on $N \backslash \overline{B}_{\eps}$, for some $0 \le u \in C^\infty(N \backslash \overline{B}_{\eps}) \cap C(N \backslash B_\eps)$ solving  
$$
\left\{ \begin{array}{l}
\Delta_h u = \eps u \qquad \text{on } \, N \backslash \overline{B}_{\eps}, \\[0.2cm]
u = 0 \quad \text{on } \, \partial B_{\eps}.
\end{array}\right.
$$
Extending $u$ with $0$ on $B_\eps$, we obtain $\Delta_h u \ge \eps u$ weakly on $N$, and since $(N,h)$ is stochastically complete, necessarily $u \equiv 0$. For each $k \in \mathbb{N}$, fix $j(k)$ such that $u_{j(k)} < 2^{-k}$ on $\Omega_k$ and $\{j(k)\}$ is increasing. Then, the series
$$
\sum_{k=1}^\infty u_{j(k)}
$$
converges locally uniformly to $w \in \lip_\loc(N \backslash \overline{B}_\eps)$ satisfying $w \ge n$ on $N \backslash \Omega_{j(n)}$. So, $w$ is an exhaustion and solves $\Delta_h w \le \eps w$ weakly on $N \backslash \overline{B}_{\eps}$, by stability of weak supersolutions.\par
The local Lipschitz regularity of $w$ allows to apply Greene-Wu approximation theorems \cite{GW}: consider $-2-w$, which satisfies $\Delta(-2-w) \ge -\eps w$ weakly on $N \backslash \overline{B}_{\eps}$. By \cite[Cor. 1 of Thm. 3.1]{GW} there exists $w_1 \in C^\infty(N \backslash \overline{B}_{\eps})$ satisfying
	\[
	|w_1 - (w+2)| < \frac{1}{2}, \qquad \Delta(-w_1) \ge -\eps w -\eps,
	\]
so in particular $w>3/2$ and $\Delta w_1 \le \eps(w+1) \le \eps w_1$ on $N \backslash \overline{B}_\eps$. Next, let $\psi$ be a cutoff of $B_{2\eps}$ in $B_{3\eps}$, and let $\bar w = \psi(1+r_p^2) + (1-\psi) w_1$, with $r_p$ the distance from $p$. Then, $\bar w$ is defined and smooth on $N$, $\bar w > 1$ on $N$ with equality at $p$, and $\Delta_h \bar w \le C$ on $B_{4\eps}$ for some constant $C>0$. Setting 
	\[
	z = \frac{\bar w + C/\eps}{1 + C/\eps},
	\]
we infer $\Delta_h z \le \eps z$ on $N$, $z \ge 1$ on $N$ with equality only at $p$, and $z(x) \ra \infty$ as $x \ra \infty$. To conclude, the function $\rho_p = \log z$ satisfies all of the desired properties.
\end{proof}

We are ready for the

\begin{proof}[Proof of Theorem \ref{teo_graph_soloricci_intro}]
Let 
	\begin{equation} \label{def_C}
		C > \kappa \sqrt{m-1},
	\end{equation}
and choose $\eps' \in (0,1)$ such that 
	\begin{equation}\label{def_epsprimo}
	C^2(1-2\eps') - (m-1)\kappa^2 > 0.
	\end{equation}	
Define for convenience 
\[
	z_0 \doteq We^{-Cu}.
\]
By contradiction, assume that for some $\tau>0$ the upper level set
	\begin{equation}\label{eq_nonmale}
	\hat \Omega_{\tau} \doteq \left\{ x \in \Omega \ : \ z_0(x) > \max \left\{ 1, \limsup_{y \ra \partial \Omega} \frac{W(y)}{e^{\kappa\sqrt{m-1}u(y)}} \right\} + \tau \right\} \neq \emptyset.
	\end{equation}
Let $\Omega_\tau$ be a connected component of $\hat \Omega_\tau$, and fix $p \in \Omega_\tau$. By Sard's theorem, we can assume that the gradient of $z_0$ does not vanish on $\partial \Omega_\tau$, so $\Omega_\tau \subset \Omega$ is an open subset with smooth boundary, whose closure $\overline{\Omega_\tau}$ in $M$ satisfies $\overline{\Omega_\tau} \subset \Omega$. Lemma \ref{lem_2} can therefore be applied to show that the graph $\overline{\Sigma_\tau}$ over $\overline{\Omega_\tau}$ is isometric to an open subset of a complete, connected, stochastically complete manifold $(N,h)$. We extend $W$ and $z_0 \doteq W e^{- C u}$ to smooth functions on $N \backslash \overline{\Sigma}_{\tau}$ satisfying
	\[
	W \ge 1, \qquad z_0 < \max \Big\{ 1, \limsup_{y \ra \partial \Omega} W(y) e^{-\kappa\sqrt{m-1}u(y)} \Big\} + \tau \qquad \text{on } \, N \backslash \overline{\Sigma}_\tau.
	\]
For $\eps >0$ to be specified later, let $\varrho_p \in C^\infty(N)$ be the exhaustion function constructed in Lemma \ref{lem_varrhop}, and for $\delta \in (0,1)$ consider
	\[	
	z = z_0 e^{-\eps \varrho_p} - \delta W \qquad \text{on } \, N.
	\]
Note that
	\[
	z = \eta W \qquad \text{on } \, \Sigma_\tau, \ \ \text{ with } \, \eta = e^{-Cu - \eps \varrho_p} - \delta.
	\]
	From $\varrho_p(p)=0$, we can choose $\delta$ small enough to satisfy
	\[
	p \in U_{\tau} \doteq \Big\{ x \in N \ : \ z > \max \Big\{ 1, \limsup_{y \ra \partial \Omega} W(y) e^{-\kappa\sqrt{m-1}u(y)} \Big\} + \tau \Big\}.
	\]
	Observe that, by construction and since $\varrho_p$ is an exhaustion, $U_\tau \subset \Sigma_\tau$ and it is relatively compact. The following computations are performed on $U_\tau$, where by construction $h=g$. Differentiating, 
	\[
	\begin{array}{rcl}
	\nabla z & = & \eta \nabla W + W \nabla \eta \\[0.3cm]
	\disp \Delta_g z & = & \disp \eta \Delta_g W + W \Delta_g \eta + 2 \langle \nabla \eta, \nabla W \rangle \\[0.4cm]
	& = &  \disp \eta \Delta_g W + W \Delta_g \eta - 2 \frac{\|\nabla W\|^2}{W} \eta + 2 \langle \frac{\nabla W}{W}, \nabla z \rangle. \\[0.4cm]
	\end{array}
	\]
From \eqref{jacobi_forW}	
	\[
	\Delta_g W \ge - (m-1)\kappa^2 \frac{W^2-1}{W^2} W + 2 \frac{\|\nabla W\|^2}{W}.
	\]
	Hence, since $z>0$ on $U_{\tau}$,
	\begin{equation}\label{ineq_bella}
	\disp \Delta_g z - 2 \langle \frac{\nabla W}{W}, \nabla z \rangle \ge \disp W \left[ \Delta_g \eta - (m-1)\kappa^2 \dfrac{W^2-1}{W^2} \eta \right] \qquad \text{on } \, U_\tau.
	\end{equation}
	By the definition of $\eta$, Young's inequality and $\|\nabla u\|^2 = (W^2-1)/W^2$, we get 
	\[
	\begin{array}{lcl}
	\frac{1}{\eta + \delta} \Delta_g \eta & = & \disp \| C \nabla u + \eps \nabla \varrho_p\|^2 - \eps \Delta_g \varrho_p \\[0.2cm]
	& \ge & C^2(1-\eps')\|\nabla u\|^2 - \eps^2\left(\frac{1}{\eps'}-1\right)\|\nabla \varrho_p\|^2 - \eps (\eps - \|\nabla \varrho_p\|^2) \\[0.2cm]
	& \ge & \disp C^2(1-\eps')\frac{W^2-1}{W^2} - \eps^2 \ge \disp C^2(1- 2\eps')\frac{W^2-1}{W^2}
	\end{array}
	\]
	where, in the second inequality, we have chosen $\eps = \eps(\eps')$ small enough in such a way that the coefficient of $\|\nabla \varrho_p\|^2$ be positive, and in the last one we possibly further reduce $\eps$ to satisfy
	\[
	\eps^2 \le C^2\eps' \frac{(1+\tau)^2-1}{(1+\tau)^2} \le C^2 \eps' \frac{W^2-1}{W^2} \qquad \text{on } \, U_{\tau}
	\]
	(recall that $W > z > 1+\tau$ on $U_{\tau}$). Hence, in view of \eqref{def_epsprimo},
	\begin{equation}\label{ineq_bella_2}
	\begin{array}{lcl}
	\disp \frac{1}{W} \left[\Delta_g z - 2 \langle \frac{\nabla W}{W}, \nabla z \rangle\right] & \ge & \disp \Delta_g \eta - (m-1)\kappa^2 \dfrac{W^2-1}{W^2} \eta \\[0.2cm]
	& \ge & \disp C^2(1-2\eps')\dfrac{W^2-1}{W^2}(\eta + \delta) - (m-1)\kappa^2 \dfrac{W^2-1}{W^2} \eta \\[0.4cm]
	& \ge & \disp \left[C^2(1-2\eps') - (m-1)\kappa^2\right] \frac{W^2-1}{W^2} (\eta+ \delta) \\[0.4cm]
	& \ge & \disp \left[C^2(1-2\eps') - (m-1)\kappa^2\right] \frac{(1+\tau)^2-1}{(1+\tau)^2}\delta > 0.
	\end{array}
	\end{equation}	
However, since $U_\tau$ is a relatively compact upper level set of $z$, evaluating \eqref{ineq_bella_2} at a maximum point of $z$ we reach the desired contradiction.\par
Concluding, for each $\tau$ the set $\Omega_\tau$ is empty, thus 
$$
\frac{W(x)}{e^{C u(x)}} \le \max \left\{ 1, \limsup_{y \ra \partial \Omega} \frac{W(y)}{e^{\kappa\sqrt{m-1}u(y)}} \right\} \qquad \text{for each } \, x \in \Omega,
$$
and the thesis follows by letting $C \downarrow \kappa \sqrt{m-1}$.\\[0.2cm]
Suppose now that $\Ricc \ge - (m-1)\kappa^2$ and that $u : M \ra \R^+$ is an entire, positive solution of $\MM[u]=0$. By the above, 
	\[
	\sqrt{1+ |Du|^2} \le e^{\kappa \sqrt{m-1} u} \qquad \text{on } \, M.
	\]
A posteriori, the same computation leading to \eqref{ineq_bella} can be performed with $\eps = \delta = 0$ and $C = \kappa \sqrt{m-1}$, and guarantees that 
	\[
	z = We^{-\kappa \sqrt{m-1} u}
	\]
satisfies 
	\[	
	\Delta_g z - 2 \langle \frac{\nabla W}{W}, \nabla z \rangle \ge - W \left[ \Delta_g(e^{-\kappa \sqrt{m-1} u}) - (m-1) \kappa^2 \frac{W^2-1}{W^2} e^{-\kappa \sqrt{m-1} u} \right] = 0.
	\]
Thus, if $z$ has a maximum point, then $z$ is a positive constant $c>0$. In particular, 
	\[
	|Du| = c \Big\{e^{2\kappa \sqrt{m-1} u} - 1 \Big\}^{\frac{1}{2}} \qquad \text{on } \, M
	\]
and 
	\[
	\Ricc(Du,Du) = -(m-1)\kappa^2 |Du|^2, \qquad \|\II\|^2 \equiv 0 \quad \text{on } \, M.
	\]	 
Therefore, by the expression of $\|\II\|^2$ we deduce $D^2 u \equiv 0$, hence $Du$ is parallel. If $u$ is non-constant, the base manifold $M$ would split as a Riemannian product $M_0 \times \R$ along the flow lines of $Du$, and $u$ would be a linear function of the $\R$-coordinate. This contradicts the fact that $u$ is non-constant and bounded from below. Hence, $u$ is constant, so equality in \eqref{eq_entire} forces $\kappa = 0$.
\end{proof}

\section{Remarks: harmonic functions and minimal graphs}

It is instructive to compare the proof of Theorem \ref{teo_graph_soloricci_intro} to that of the gradient estimate for entire harmonic functions on $M$. Suppose that $M$ satisfies $\Ricc \ge - (m-1)\kappa^2$ for some constant $\kappa \ge 0$, and let $u >0$ solve $\Delta u = 0$ on $M$. By \cite{yau,chengyau} and the subsequent refinement in \cite[Lem. 2.1]{liwang}, the following sharp inequality holds:
	\begin{equation}\label{chengyau}
	|D \log u| \le (m-1)\kappa \qquad \text{on } \, M.
	\end{equation}
In \cite{chengyau,liwang}, the authors achieve the goal by passing to the limit, as $R \ra \infty$, localized estimates for $|D\log u|$ on balls $B_R$. On the other hand, one can also prove the bound without the need to localize, in the spirit of \cite{yau}. The idea can briefly be  summarized as follows: first, by Bochner's formula 
	\begin{equation}\label{bochner}
	\frac{1}{2} \Delta |Du|^2 \ge |D^2 u|^2 - (m-1)\kappa^2 |Du|^2,
	\end{equation}
	and because of the refined Kato inequality
	\begin{equation}\label{kato_harm}
	|D^2 u|^2 \ge \frac{m}{m-1} \big|D|Du|\big|^2,
	\end{equation}
the function $z = |D\log u|$ turns out to solve
	\begin{equation}\label{eq_chengyau}
	\Delta z \ge \disp \frac{1}{m-1} \frac{|Dz|^2}{z} - (m-1)\kappa^2 z - \frac{2(m-2)}{m-1} \sigma(D \log u, D z) + \frac{1}{m-1} z^3
	\end{equation}
on $M$. Even though $z$ is not bounded a priori, because of the appearance of the coercive term $z^3$ (that precisely depends on the refined Kato inequality), the function 
	\[
	f(t) = \frac{1}{m-1} t^3 - (m-1)\kappa^2 t
	\]
satisfies the Keller-Osserman condition 
	\begin{equation}\label{KO}
	\int^\infty \frac{\di s}{\sqrt{F(s)}} < \infty, 			
	\end{equation}
where we set	
	\[
	F(s) = \int_0^s f(t) \di t
	\]
and note that $F>0$ for large $s$. Condition \eqref{KO} originated from the works of Keller \cite{keller} and Osserman \cite{osserman}, and by \cite{motomiya,prsmemoirs} it guarantees that $\sup_M z < \infty$ (in fact, we are neglecting the product term $\sigma(D \log u, D z)$, but this term can be suitably absorbed and is not relevant for the present discussion). Once $z$ is bounded, the Omori-Yau maximum principle (cf. \cite{prsmemoirs}) implies the inequality $f(\sup_M z) \le 0$, and consequently, \eqref{chengyau}.\par
As a matter of fact, once \eqref{eq_chengyau} is given,  the full strength of the lower bound $\Ricc \ge - (m-1)\kappa^2$ is not needed anymore to obtain 
	\begin{equation}\label{eq_desired_cy}
	\sup_M z < \infty, \qquad f\big(\sup_M z\big) \le 0.
	\end{equation}
Indeed, to reach the goal it would be sufficient that
	\begin{equation}\label{eq_uppervolume}
	\liminf_{r \ra \infty} \frac{\log |B_r|}{r^2} = 0,
	\end{equation}
a condition that is clearly implied by $\Ricc \ge -(m-1)\kappa^2$ via Bishop-Gromov theorem. The sufficiency of \eqref{eq_uppervolume}, although not explicitly remarked, can be deduced from the proof of an estimate analogous to \eqref{chengyau} in the more general case of $p$-harmonic functions, $p \in (1,\infty)$, recently appeared in \cite{sungwang} and Section 2 of \cite{maririgolisetti_mono}. There, the authors proved the sharp inequality
	\[
	|D \log u| \le \frac{m-1}{p-1} \kappa \qquad \text{on } \, M
	\]
for a positive solution of $\Delta_p u = 0$ by means of integral estimates. 

\begin{remark}
\emph{Note that the quadratic exponential upper bound for the volume growth in \eqref{eq_uppervolume} is borderline for the stochastic completeness of $M$, and this is not by chance. It is a sharp threshold to guarantee any of the following properties:
	\begin{itemize}
	\item[-] that the Keller-Osserman condition \eqref{KO} forces every solution of $\Delta z \ge f(z)$ to be bounded from above;
	\item[-] that any such solution of $\Delta z \ge f(z)$ with $\sup_M u < \infty$ satisfies $f(\sup_M u) \le 0$. By \cite{prsmemoirs}, this is equivalent to the stochastic completeness of $M$.
	\end{itemize}
However, the picture is subtle and a more precise account goes beyond the scope of the present work: the reader who is interested in the relations between Keller-Osserman conditions and maximum principles at infinity, is referred to \cite{bmpr} for a detailed discussion.
}
\end{remark}
In the attempt to adapt the argument to solutions of the minimal surface equation, clearly replacing the Bochner inequality \eqref{bochner} with its equivalent form given by the Jacobi equation \eqref{jacobi_forW}, the first main difficulty comes from the refined Kato inequality: using the harmonicity of $u$ on $\Sigma$ and \eqref{nablau2},
\begin{equation}\label{Kato}
\|\II\|^2 = W^2 \|\nabla^2 u\|^2 \ge W^2 \frac{m}{m-1} \big\| \nabla \|\nabla u\|\big\|^2 = \frac{m}{m-1} \frac{\|\nabla W\|^2}{W^2(W^2-1)}
\end{equation}
where $|Du|>0$. Differently from \eqref{kato_harm}, however, the helpful lower bound provided by \eqref{Kato} is lower order for large $W$, and seems too small to help in  \eqref{jacobi_forW}. Indeed, we are still unable to construct, out of $z_0 = We^{-Cu}$, a function $z$ solving a differential inequality that parallels \eqref{eq_chengyau} and contains a potential term $f(z)$ matching a Keller-Osserman condition. For this reason, we had to exploit the AK-duality to be able to localize the problem. The fact that estimates via localization on $\Sigma$ still provide a bound for $W$ indicates that some sort of coercivity might still appear in the equation satisfied by suitably chosen $z$. We intend to pursue this investigation further.

\section{Bernstein and half-space properties: proof of Theorem \ref{mainteo_tiporss}}\label{sec_Thm2}

\begin{proof}
The Bernstein property for entire minimal graphs on complete manifolds with $\Ricc \ge 0$ is an immediate consequence of Theorem \ref{teo_graph_soloricci_intro} applied with $\kappa = 0$.\par 
Regarding the half-space property, let $M$ be a complete parabolic manifold with $\Ricc \ge -(m-1)\kappa^2$ for some $\kappa>0$, and let $\varphi : S^m \ra M \times \R$ be a properly immersed minimal hypersurface contained in a closed half-space, that we can assume to be $M \times \R^+_0$. We follow the strategy in \cite{rosenbergschulzespruck}. Arguing by contradiction, we suppose that $\varphi(S)$ is not a slice. Up to downward translating $\varphi(S)$, we can assume that
\begin{equation}\label{inf_eq}
\inf\Big\{ t \in \R^+_0 : (x,t) \in \varphi(S) \; \text{for some } x \in M \Big\} = 0
\end{equation}
and by the tangency principle we have $\varphi(S) \cap \{t=0\} = \emptyset$. From this fact and properness of $\varphi$ we deduce the existence of a positive solution $u$ of $\MM[u]=0$, with bounded gradient, defined on the complement of a small ball $B_\eps \subset M$ and whose graph $\Sigma$ lies below $\varphi(S)$. Then, we prove that such $u$ must in fact be a positive constant, contradicting \eqref{inf_eq}. However, here the construction of $u$ differs from that carried out in \cite{rosenbergschulzespruck}, as we use Perron's method instead of an exhaustion argument on compacts $\Omega_j$ together with the continuity method. This approach, that we learnt from \cite{mazetwande,rite}, allows us to avoid the need of local estimates, necessary to pass to the limit the graphs in $\Omega_j \backslash B_\eps$, and directly guarantees the existence of a complete minimal graph to which Theorem \ref{teo_graph_soloricci_intro} will be applied.\par
Define
$$
w: M \ra \R^+_0, \qquad w(x) = \inf\Big\{ t \in \R^+_0 :  (x,t) \in \varphi(S) \Big\}. 
$$
As previously said, up to downward translating $\varphi(S)$ we can suppose that
$$
\inf w = 0.
$$
Observe that $w$ is lower-semicontinuous: indeed, let $y_i \ra y$ in $M$ and satisfying $w(y_i) \ra \liminf_{p \ra y} w(p)$. Without loss of generality, assume $w(y_i) < \infty$. Fix $t_i >0$ such that $(y_i, t_i) \in \varphi(S)$ and $t_i < w(y_i) + i^{-1}$. The properness of $\varphi$ ensures that $(y_i,t_i) \ra (y, t) \in \varphi(S)$ up to a subsequence, thus
$$
w(y) \le t = \lim_i w(y_i) = \liminf_{p \ra y} w(p). 
$$
\par
Fix $o\in M$ such that $w(o) < \infty$. Suppose, by contradiction, that $\varphi(S)$ is not a slice. Then $(o,0)\not\in\varphi(S)$ by the tangency principle and since $\varphi$ is proper, hence $w(o)>0$. Fix $0 < \delta < w(o)$. By lower-semicontinuity of $w$, there exists $\eps>0$ such that $w > \delta$ on $B_{2\eps} = B_{2\eps}(o)$, so that $B_{2\eps}\times[0,\delta]\cap \varphi(S)=\emptyset$. Hereafter, if not otherwise stated, geodesic balls in $M$ are thought to be centered at $o$. As in \cite{rosenbergschulzespruck}, for sufficiently small $\delta$ there exists $z \in C^2(\overline{B}_{2\eps}\backslash B_\eps)$ radially symmetric with respect to $o$, solving
\begin{equation}\label{eq_subsol}
\left\{ \begin{array}{l}
\MM[z] \ge 0 \qquad \text{on } \, B_{2\eps}\backslash \overline{B}_\eps, \\[0.2cm]
0 < z < \delta \qquad \text{on } \, B_{2\eps}\backslash \overline{B}_\eps \\[0.2cm]
z= \delta \quad \text{on } \, \partial B_{\eps}, \qquad z= 0 \quad \text{on } \, \partial B_{2\eps}
\end{array}\right.
\end{equation}
and satisfying
$$
	\sup_{\partial B_\eps}|Dz| < \infty.
$$
Extending $z$ by setting $z=0$ outside of $B_{2\eps}$ we still obtain a subsolution for the minimal surface equation, in the viscosity sense, on the entire $M \backslash B_\eps$. Consider the following modified Perron's class
$$
\begin{array}{ccl}
\FF & = & \disp \Big\{ u : M \backslash B_\eps \ra \R \ \ \text{ continuous such that} \ \ u \le \min\{\delta, w(x) \}  \\[0.4cm]
& & \disp \text{and} \ \ \MM[u] \ge 0 \ \ \text{ in the viscosity sense.} \Big\}.
\end{array}
$$
First, $\FF \neq \emptyset$ because it contains $z$, and we can therefore define Perron's envelope
$$
u(x) = \sup \big\{ v(x) \ \ : \ \ v \in \FF \big\},
$$
that is lower-semicontinuous and satisfies $u \le \delta$. 
We claim that $\MM[u]=0$ on $M \backslash \overline{B}_{\eps}$. Fix $x \in M \backslash \overline{B}_\eps$, and choose a sequence $\{v_j\} \subset \FF$ with $v_j(x) \ra u(x)$. Up to replacing $v_j$ with $\max\{v_1, \ldots, v_j\} \in \FF$, we can assume that $v_j(x) \uparrow u(x)$. Let $\bar \eps$ be small enough so that the normalized mean curvature $H$ of $\partial B_{\bar \eps}(x)$ in the inward direction is positive and large enough to satisfy the condition in \cite[Thm. 1]{dhd}. 
Then, there exists a solution of the Dirichlet problem
$$
\left\{ 
\begin{array}{ll}
\MM[v'_j] = 0 & \quad \text{on } \, B_{\bar \eps}(x), \\[0.2cm]
v'_j = v_j  & \quad \text{on } \, \partial B_{\bar \eps}(x),
\end{array}
\right.
$$
and by comparison $v'_j \ge v_j$. Consider the minimal replacement $\harm(v_j)$ defined by 
$$
\harm(v_j) = \left\{ \begin{array}{ll}
v'_j & \quad \text{on } \, B_{\bar \eps}(x), \\[0.2cm]
v_j & \quad \text{on } \, M \backslash B_{\bar \eps}(x). 
\end{array}\right.
$$
We claim that $\harm(v_j) \in \FF$. First, $v_j \le \harm(v_j) \le \delta$ by comparison, and thus $\MM[\harm(v_j)] \ge 0$ in the viscosity sense. We are left to show that $\harm(v_j) \le w$ on $M \backslash B_{\bar \eps}$. Suppose by contradiction that $\max\{ \harm(v_j)(p) - w(p) : p \in \overline{B_{\bar \eps}(x)}\} >0$. Note that the maximum exists since $\harm(v_j)-w$ is upper-semicontinuous, and that it is attained at a point interior to $B_{\bar \eps}(x)$, since $\harm(v_j) = v_j \le w$ on $\partial B_{\bar \eps}(x)$. We can therefore translate the graph of $\harm(v_j)$ downwards up to a first interior touching point with $\varphi(S)$, yielding a contradiction by the tangency principle.\par 
Summarizing, from $\harm(v_j) \in \FF$ we deduce that $\harm(v_j)(x) \ra u(x)$. Elliptic estimates then yield the local smooth subconvergence of $\harm(v_j)$ to some solution $v$ of $\MM[v]=0$ on $B_{\bar \eps}(x)$, that also satisfies $v \le u$ with the equality at $x$. We claim that $u \equiv v$ on $B_{\bar \eps}(x)$. To this end, suppose that $u(p) > v(p)$ for some $p \in B_{\bar \eps}(x)$, consider $s \in \FF$ such that $s(p) > v(p)$ and the increasing sequence $\bar v_j = \max\{ v_j, s\}$. Let $\harm(\bar v_j)$ be the minimal replacement of $\bar v_j$. Again elliptic estimates guarantee that $\harm(\bar v_j) \ra \bar v$ on $B_{\bar \eps}(x)$ locally smoothly. By construction, $\bar v \ge v$ on $B_{\bar \eps}(x)$, with strict inequality at $p$ but with equality at $x$. This contradicts the tangency principle.\par
By construction, $0 \le$ $u \le \min\{\delta,w\}$, in particular, $\inf_{M \setminus B_\eps} u = 0$. We now conclude the proof by showing that $u \equiv \delta$. 
Note that $u$ is regular up to $\partial B_\eps$, and in particular, from $z \le u \le \delta$ ($z$ in \eqref{eq_subsol}), we deduce $u= \delta$, $|Du| \le |Dz|$ on $\partial B_\eps$. We can apply the gradient estimate in Theorem \ref{teo_graph_soloricci_intro} to deduce $|Du| \le C$ on $M \setminus \overline{B_\eps}$, for some constant $C$. We can therefore conclude exactly as in \cite{rosenbergschulzespruck}: namely, if $(\Sigma,g)$ is the graph of $u$, from the gradient estimate we deduce
$$
\int_{\Sigma} \|\nabla \phi\|^2 \di x_g \le C' \int_M |D \phi|^2 \di x \qquad \text{for each } \, \phi \in \lip_c(M \backslash \overline{B}_\eps), 
$$
for some constant $C'>0$, thus $\Sigma$ is a parabolic manifold with boundary. Consequently, the harmonic function $u$ on $\Sigma$ is constantly equal to $\delta$, as desired.
\end{proof}

\begin{proof}[Proof of Theorem \ref{teo_tkachev}]
Because of volume comparison, $|B_r| \le C r^m$ for some $C>0$ and thus, by Theorem 10.30 in \cite{bmpr}, $f(u(x)) = 0$ for every $x \in M$. The function $u$ is therefore a minimal graph, and since $f \not \equiv 0$ then $u$ is bounded on one side. By the Bernstein property, $u$ is constant.
\end{proof}

%
%


\begin{thebibliography}{spmpsci}
%
%

\bibitem{almgren} F.J. Almgren Jr., \emph{Some interior regularity theorems for minimal surfaces and an
extension of Bernstein's theorem.} Ann. of Math. 85, 277-292 (1966).

\bibitem{bernstein} S. Bernstein, \emph{Sur un th\'eor\`eme de g\'eom\`etrie et son application aux \'equations aux d\'eriv\'ees partielles du type elliptique.} Comm. Soc. Math. de Kharkov 2 (15), 38-45 (1915-1917); German translation: \emph{\"Uber ein
geometrisches Theorem und seine Anwendung auf die partiellen Differentialgleichungen vom elliptischen Typus.}
Math. Z. 26, no. 1, 551-558 (1927).

\bibitem{bmpr} B. Bianchini, L. Mari, P. Pucci and M. Rigoli, \emph{Geometric Analysis of Quasilinear Inequalities on complete manifolds. Maximum and compact support principles and detours on manifolds.} Frontiers in Mathematics. Birkh\"auser/Springer, Cham, (2021), 286 pp.

\bibitem{bdgm} E. Bombieri, E. De Giorgi and M. Miranda, \emph{Una maggiorazione a priori relativa alle ipersuperfici minimali non parametriche.} Arch. Rational Mech. Anal. 32, 255-267 (1969).

\bibitem{bdgg} E. Bombieri, E. De Giorgi and E. Giusti, \emph{Minimal cones and the Bernstein problem.} Invent. Math. 7, 243-268 (1969). 

\bibitem{bg} E. Bombieri and E. Giusti, \emph{Harnack's inequality for elliptic differential equations on minimal surfaces.} Invent. Math. 15, 24-46 (1972).

\bibitem{cc_almost_rigidity} J. Cheeger and T.H. Colding, \emph{Lower bounds on Ricci curvature and the almost rigidity of warped products.} Ann. of Math 144,  189-237 (1996). 

\bibitem{cc_1} J. Cheeger and T.H. Colding, \emph{On the structure of spaces with Ricci curvature bounded below. I.} J. Differ. Geom. 46, 406-480 (1997).

\bibitem{cc_2} J. Cheeger and T.H. Colding, \emph{On the structure of spaces with Ricci curvature bounded below. II.} J. Differ. Geom. 54, 13-35 (2000). 

\bibitem{cc_3} J. Cheeger and T.H. Colding, \emph{On the structure of spaces with Ricci curvature bounded below. III.} J. Differ. Geom. 54, 37-74 (2000).

\bibitem{ccm} J. Cheeger, T.H. Colding and W.P. Minicozzi, \emph{Linear growth harmonic functions on complete manifolds with nonnegative Ricci curvature.} Geom. Funct. Anal. 5, 948-954 (1995).

\bibitem{chr} J.-B. Casteras, I. Holopainen and J.B. Ripoll, \emph{On the asymptotic Dirichlet problem for the minimal hypersurface equation in a Hadamard manifold.} Potential Anal. 47, no. 4, 485-501 (2017).

\bibitem{chr2} J.-B. Casteras, I. Holopainen and J.B. Ripoll, \emph{Convexity at infinity in Cartan-Hadamard manifolds and applications to the asymptotic Dirichlet and Plateau problems.} Math. Z. 290, no. 1-2, 221-250 (2018).

\bibitem{chh} J.-B. Casteras, E. Heinonen and I. Holopainen, \emph{Solvability of minimal graph equation under pointwise pinching condition for sectional curvatures.} J. Geom. Anal. 27, no. 2, 1106-1130 (2017).

\bibitem{chh_nonexistence} J.-B. Casteras, E. Heinonen and I. Holopainen, \emph{Existence and non-existence of minimal graphic and $p$-harmonic functions.} P. Roy. Soc. Edinb. A 150, 341-366 (2020).


\bibitem{chengyau} S.Y. Cheng and S.T. Yau, \emph{Differential equations on Riemannian manifolds and their geometric applications.} Comm. Pure Appl. Math. 28, no. 3, 333-354 (1975).

\bibitem{dhd} M. Dajczer, P.A. Hinojosa and J.H.S. de Lira, \emph{Killing graphs with prescribed mean curvature.} Calc. Var. Partial Differential Equations 33, no. 2, 231-248 (2008).
 
\bibitem{dg1} E. De Giorgi, \emph{Una estensione del teorema di Bernstein.} Ann. Scuola Norm. Sup. Pisa (3) 19, 79-85 (1965).

\bibitem{dg2} E. De Giorgi, \emph{Errata-Corrige: ``Una estensione del teorema di Bernstein".} Ann. Scuola Norm. Sup. Pisa Cl. Sci. (3) 19, no. 3, 463-463 (1965).

\bibitem{ding} Q. Ding, \emph{Liouville-type theorems for minimal graphs over manifolds.} Anal. PDE 14, no. 6, 1925-1949 (2021).

\bibitem{dingjostxin} Q. Ding, J. Jost and Y. Xin, \emph{Minimal graphic functions on manifolds of nonnegative Ricci curvature.} Comm. Pure Appl. Math. 69, no. 2, 323-371 (2016).

\bibitem{dingjostxin2} Q. Ding, J. Jost and Y. Xin, \emph{Existence and non-existence of area-minimizing hypersurfaces in manifolds of non-negative Ricci curvature.} Amer. J. Math. 138, no. 2, 
287-327 (2016).

\bibitem{docarmolawson} M.P. Do Carmo and H.B. Lawson Jr. \emph{On Alexandrov-Bernstein theorems in hyperbolic space.} Duke Math. J. 50, no. 4, 995-1003 (1983).

\bibitem{ES_F_R} N. do Esp\'irito Santo, S. Fornari and J.B. Ripoll, \emph{The Dirichlet problem for the minimal hypersurface equation in $M \times \R$ with prescribed asymptotic boundary.} J. Math. Pure Appl. 93, 204-221 (2010). 

\bibitem{dok76} P. Doktor, \emph{Approximation of domains with Lipschitzian boundary}. \v{C}asopis P\v{e}st. Mat. 101, no. 3, 237-255 (1976).

\bibitem{esko_survey} E. Heinonen, \emph{Survey on the asymptotic Dirichlet problem for the minimal surface equation.} Minimal surfaces: integrable systems and visualisation, 111-129, Springer Proc. Math. Stat., 349, Springer, Cham, 2021.

\bibitem{far1} A. Farina, \emph{A Bernstein-type result for the minimal surface equation.} Ann. Scuola Norm. Sup. Pisa XIV 5, 1231-1237 (2015).

\bibitem{far2} A. Farina, \emph{A sharp Bernstein-type theorem for entire minimal graphs.} Calc. Var. Partial Differential Equations 57, no. 5, Art. 123, 5 pp (2018).

\bibitem{farina_minimal} A. Farina, \emph{Liouville-type theorems for elliptic problems.} In: M. Chipot (Ed.), Handbook of Differential Equations - vol. 4, Stationary Partial Differential Equations, Elsevier, 2007, pp. 60-116.

\bibitem{finn} R. Finn, \emph{On equations of minimal surface type.} Ann. of Math. (2) 60, 397-416 (1954).

\bibitem{finn2} R. Finn, \emph{New estimates for equations of minimal surface type.} Arch. Rational Mech. Anal. 14, 337-375 (1963).

\bibitem{fleming} W.H. Fleming, \emph{On the oriented Plateau problem.} Rend. Circolo Mat. Palermo 9, 69-89 (1962).

\bibitem{galvezrosenberg} J.A. G\'alvez and H. Rosenberg, \emph{Minimal surfaces and harmonic diffeomorphisms from the complex plane onto certain Hadamard surfaces.} Amer. J. Math. 132, 1249-1273 (2010).

\bibitem{giusti} E. Giusti, \emph{Minimal surfaces and functions of bounded variation.} Monographs in Mathematics, 80, xii+240 pp. Birkh\"auser Verlag, Basel (1984). 

\bibitem{grigoryan} A. Grigor'yan, \emph{Analytic and geometric background of recurrence and non-explosion of the Brownian motion on Riemannian manifolds.} Bull. Amer. Math. Soc. 36, 135-249 (1999).

\bibitem{GW} R.E. Greene and H. Wu, \emph{$C^\infty$ approximations of convex, subharmonic, and plurisubharmonic functions.} Ann. Sci. \'Ecole Norm. Sup. (4) 12, no. 1, 47-84 (1979).

\bibitem{hr} I. Holopainen and J.B. Ripoll, \emph{Nonsolvability of the asymptotic Dirichlet problem for some quasilinear elliptic PDEs on Hadamard manifolds.} Rev. Mat. Iberoam. 31, no. 3, 1107-1129 (2015).
 
\bibitem{hopf_bern} E. Hopf, \emph{On S. Bernstein's theorem on surfaces $z(x,y)$ of nonpositive curvature.}
Proc. Amer. Math. Soc. 1, 80-85 (1950).

\bibitem{js} H. Jenkins and J. Serrin, \emph{Variational problems of minimal surface type, I.} Arch. Rational
Mech. Anal. 12, 185-212 (1963).

\bibitem{keller} J.B. Keller, \emph{On solutions of $\Delta u=f(u)$.} Comm. Pure Appl. Math. 10, 503-510 (1957).

\bibitem{korevaar} N. Korevaar, \emph{An easy proof of the interior gradient bound for solutions of the precribed mean curvature equation.} Proc. of Symp. in Pure Math. 45, AMS (1986).

\bibitem{liwang} P. Li and J. Wang, \emph{Complete manifolds with positive spectrum. II.} J. Differential Geom. 62, 143-162 (2002).

\bibitem{liwang_mini} P. Li and J. Wang, \emph{Finiteness of disjoint minimal graphs.} Math. Res. Lett. 8, no. 5-6, 771-777 (2001).
 
\bibitem{liyau_acta} P. Li and S.T. Yau, \emph{On the parabolic kernel of the Schr\"odinger operator.} Acta Math. 156, no. 3-4, 153-201 (1986).

\bibitem{maririgolisetti_mono} L. Mari, M. Rigoli and A.G. Setti, \emph{On the $1/H$-flow by $p$-Laplace approximation: new estimates via fake distances under Ricci lower bounds}. To appear on Amer. J. Math.,  available at arXiv:1905.00216.

\bibitem{maripessoa} L. Mari and L.F. Pessoa, \emph{Duality between Ahlfors-Liouville and Khas'minskii properties for nonlinear equations.} Comm. Anal. Geom. 28, no. 2, 395-497 (2020). 

\bibitem{maripessoa_2} L. Mari and L.F. Pessoa, \emph{Maximum principles at infinity and the Ahlfors-Khas'minskii duality: an overview.} Contemporary research in elliptic PDEs and related topics, 419-455, Springer INdAM Ser., 33, Springer, Cham (2019).

\bibitem{marivaltorta} L. Mari and D. Valtorta, \emph{On the equivalence of stochastic completeness, Liouville and Khas'minskii condition in linear and nonlinear setting.} Trans. Amer. Math. Soc. 365, no. 9, 4699-4727 (2013).

\bibitem{mazetwande} L. Mazet and G. Wanderley, \emph{A half-space theorem for graphs of constant mean curvature $0<H< \frac{1}{2}$ in $\HH^2 \times \R$.} Illinois J. Math. 59, no. 1, 43-53 (2015).
 
\bibitem{emi} E.J. Mickle, \emph{A remark on a theorem of Serge Bernstein.} Proc. Amer. Math. Soc. 1, 86-89 (1950).

\bibitem{motomiya} K. Motomiya, \emph{On functions which satisfy some differential inequalities on Riemannian manifolds.} Nagoya Math. J. 81, 57-72 (1981).

\bibitem{jmoser} J. Moser, \emph{On Harnack’s theorem for elliptic differential equations.} Comm. Pure Appl. Math. 14, 577-591 (1961).

\bibitem{nu} Y.~Naito and H.~Usami, \emph{Entire solutions of the inequality
$\diver(A(|\nabla u |)\nabla u )\ge f(u)$.} Math. Z. 225, 167-175 (1997).

\bibitem{nr} B. Nelli, H. Rosenberg, \emph{Minimal surfaces in $\HH^2 \times \R$.} Bull. Braz Math Soc. 33, 263-292 (2002). 

\bibitem{nitsche} J.C.C. Nitsche, \emph{On new results in the theory of minimal surfaces.} Bull. Amer. Math. Soc. 71, 195-270 (1965).


\bibitem{osserman} R. Osserman, \emph{On the inequality $\Delta u\geq f(u)$.} Pacific J. Math. 7, 1641-1647 (1957).

\bibitem{prsmemoirs} S. Pigola, M. Rigoli and A.G. Setti, \emph{Maximum principles on Riemannian manifolds and
applications.} Mem. Amer. Math. Soc. 174, no. 822 (2005).

\bibitem{prs} S. Pigola, M. Rigoli and A.G. Setti, \emph{Vanishing and finiteness results in Geometric Analysis. A generalization of the Bochner technique.} Progress in Mathematics 266, xiv+282 pp. Birk\"auser Verlag, Basel (2008).

\bibitem{pucciserrin} P. Pucci and J. Serrin, \emph{The maximum principle.} Progress in Nonlinear Differential Equations and their Applications, 73, x+235 pp. Birkh\"auser Verlag, Basel (2007). 

\bibitem{rite} J.B. Ripoll and M. Telichevesky, \emph{On the asymptotic plateau problem for CMC hypersurfaces in hyperbolic space.} Bull. Braz. Math. Soc. (N.S.) 50, no. 2, 575-585 (2019).

\bibitem{rite_1} J.B. Ripoll and M. Telichevesky, \emph{Regularity at infinity of Hadamard manifolds with respect to some elliptic operators and applications to asymptotic Dirichlet problems.}  Trans. Amer. Math. Soc. 367, 1523-1541 (2015).

\bibitem{rosenbergschulzespruck} H. Rosenberg, F. Schulze and J. Spruck, \emph{The half-space property and entire positive minimal graphs in $M \times \R$.} J. Diff. Geom. 95, 321-336 (2013).

\bibitem{schmidt} T. Schmidt, \emph{Strict interior approximation of sets of finite perimeter and functions of bounded variation.} Proc. Amer. Math. Soc. 143, no. 5, 2069-2084 (2015).

\bibitem{serrin} J. Serrin, \emph{A priori estimates for solutions of the minimal surface equation.} Arch. Rational Mech. Anal. 14, 376-383 (1963).

\bibitem{serrin_2} J. Serrin, \emph{Addendum to "A priori estimates for solutions of the minimal surface equation".} Arch. Rational Mech. Anal. 28, 149-154 (1968).

\bibitem{Serrin_4} J. Serrin, \emph{Entire solutions of quasilinear elliptic equations.} J. Math. Anal. Appl. 352, no. 1, 3-14 (2009).

\bibitem{simon_jdg}	L. Simon, \emph{Entire solutions of the minimal surface equation.} J. Differ. Geom. 30, 643-688 (1989).

\bibitem{simons} J. Simons, \emph{Minimal varieties in Riemannian manifolds.} Ann. of Math. 88, 62-105  (1968).

\bibitem{sungwang} C.-J. A. Sung and J. Wang, \emph{Sharp gradient estimate and spectral rigidity for $p$-Laplacian.} Math. Res. Lett. 21, no. 4, 885-904 (2014).


\bibitem{tkachev} V.G. Tkachev, \emph{Some estimates for the mean curvature of graphs over domains in $\R^n$.} Dokl. Akad. Nauk SSSR 314, no. 1, 140-143 (1990); translation in Soviet Math. Dokl. 42, no. 2, 387-390  (1991).


\bibitem{trudi} N.S. Trudinger, \emph{A new proof of the interior gradient bound for the minimal surface equation in $n$ dimensions.} Proc. Nat. Acad. Sci. U.S.A. 69, 821-823 (1972).

\bibitem{yau} S.-T. Yau, \emph{Harmonic functions on complete Riemannian manifolds.} Comm. Pure
Appl. Math. 28, 201-228 (1975).


\end{thebibliography}


\end{document}